\documentclass{article}
\usepackage{amsmath}
\usepackage{amssymb}
\usepackage{amsthm}
\usepackage[margin=1in]{geometry}

\newtheorem{thm}{Theorem}
\newtheorem{lemma}[thm]{Lemma}
\newtheorem{prop}[thm]{Proposition}
\newtheorem{cor}[thm]{Corollary}
\newtheorem{quest}{Question}

\theoremstyle{definition}
\newtheorem{dfn}{Definition}

\theoremstyle{remark}
\newtheorem*{subproof}{Proof}
\newtheorem{claim}{Claim}[thm]

\newcommand{\ZF}{\text{ZF}}
\newcommand{\ZFC}{\text{ZFC}}

\newcommand{\DC}{\text{DC}}
\newcommand{\HOD}{\text{HOD}}

\newcommand{\id}{\text{id}}
\newcommand{\VitEq}{\mathrel{\mathbb{E}_0}} % Vitali equivalence relation

\newcommand{\dom}[1]{\text{dom}(#1)}
\newcommand{\nat}{\mathbb{N}}
\newcommand{\Coll}[2]{\text{Coll}({#1}, {#2})}
\newcommand{\card}[1]{\lvert{#1}\rvert}

\newcommand{\colouringPoset}[2]{P_{#1}(#2)}

\newcommand{\powset}[1]{\mathcal P(#1)}

\newcommand{\balanceEq}{\equiv_b}

\title{Ultrafilters, Transversals, and the Hat Game}
\author{Luke Serafin}
\date{\today}

\begin{document}

\maketitle

\begin{abstract}
Geschke, Lubarsky, and Rahn in ``Choice and the Hat Game''~\cite{choice-and-the-hat-game} generalize the classic hat game puzzle to infinitely-many players and ask whether every model of set theory without choice
in which the optimal solution can be carried out contains either a nonprincipal ultrafilter on $\nat$ or else a Vitali set.
A negative answer is obtained here by constructing a model in which there is an optimal solution to the hat game puzzle but no nonprincipal ultrafilter on $\nat$ and no Vitali set.
This is accomplished in a more general setting, establishing that for any Borel bipartite graph $\Gamma$ not embedding some $K_{n,\omega_1}$ and with countable colouring number there is a model of $\ZF + \DC$ in which $\Gamma$ has a
$2$-colouring but there is no ultrafilter as above or Vitali set.
The same conclusion applies to the natural generalization of the hat game to an arbitrary finite number of hat colours.
\end{abstract}

\section{The Hat Game}

Consider a game in which a group of people is arranged in a line such that the person at the back of the line can see everyone else,
the person at the front of the line can see no one else, and all other people see just those who are in front of them.
Everyone closes their eyes and a hat is placed on each player's head.
It is common knowledge that these hats are either black or white, and the goal of the group is to correctly guess as many of their hat colours
as possible.
The rules of the game are as follows:
\begin{itemize}
\item People guess their own hat colours, one at a time, starting at the back of the line and moving to the front.
\item No communication is allowed beyond a single guess of the colour of one's own hat.
\end{itemize}
Assuming the group is allowed to confer and agree on a strategy before receiving their hats, what is the fewest number of wrong guesses
that the group can guarantee?

This is a well-known recreational mathematics puzzle, and as noted in \cite{choice-and-the-hat-game} (but known long before), the
surprising answer is that by having the first player announce the parity of the number of black hats which he or she sees (encoded as a hat colour), the players can guarantee at most one error.
This is clearly optimal.

The hat problem can be naturally generalized in two directions.
The authors of \cite{choice-and-the-hat-game} consider infinite sequences of black and white hats and show that players can still achieve
at most one error by means of a \emph{parity function}, which is a $\{0, 1\}$-valued function $p$ on the collection of all infinite sequences
of zeroes and ones with the property that if two such sequences $x$ and $y$ differ by a single bit, then $p(x) \ne p(y)$.
In fact, the full power of the axiom of choice is not needed, and the authors observe that either a nonprincipal ultrafilter on $\nat$ or an
$\VitEq$-transversal (otherwise known as a Vitali set) suffices.
They then ask whether the existence of a parity function implies the existence of either of these objects (\cite{choice-and-the-hat-game},
Question 12).
We answer this question with the following:
\begin{thm}
Assuming the consistency of the existence of an inaccessible cardinal, there is a model of set theory in which there is a parity function
but there is neither a nonprincipal ultrafilter on $\nat$ nor an $\VitEq$-transversal.
\end{thm}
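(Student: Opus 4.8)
The plan is to prove the general statement announced in the abstract and to read off the Theorem as a special case. Thus I aim to show: for every Borel bipartite graph $\Gamma$ with countable colouring number that omits some $K_{n,\omega_1}$, there is a model of $\ZF+\DC$ containing a proper $2$-colouring of $\Gamma$ but no nonprincipal ultrafilter on $\nat$ and no $\VitEq$-transversal. The Theorem is the instance of the graph $\Gamma_0$ on $2^\nat$ in which $x$ is adjacent to $y$ iff they differ in exactly one coordinate: a proper $2$-colouring of $\Gamma_0$ is precisely a parity function in the sense of \cite{choice-and-the-hat-game}, $\Gamma_0$ is Borel and bipartite (all its cycles have even length, and bipartiteness is absolute), each vertex has exactly $\aleph_0$ neighbours so its colouring number is $\aleph_1$, and two distinct vertices have at most one common neighbour, so $\Gamma_0$ omits $K_{2,2}$ and a fortiori $K_{2,\omega_1}$. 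Replacing $2^\nat$ by $k^\nat$ handles the $k$-colour version of the hat game in the same way.

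For the general statement, fix $V\models\ZFC$ with an inaccessible cardinal $\kappa$ and let $V[G]$ be the Levy collapse extension by $\Coll{\omega}{{<}\kappa}$, so that $N_0:=L(\Reals)^{V[G]}$ is a Solovay model: it satisfies $\ZF+\DC$ and all of its sets of reals are Lebesgue measurable and have the Baire property. In $V[G]$ we have $2^{\aleph_0}=\aleph_1$, so the colouring poset $\colouringPoset{2}{\Gamma}$ --- whose conditions are partial proper $2$-colourings of $\Gamma$ of size less than $\aleph_1$, set up so (using a well-ordering witnessing the colouring number, and the fact that a connected bipartite graph has exactly two proper $2$-colourings) that $\omega$-chains of conditions have upper bounds and every vertex lies in the domain of densely many conditions --- is $\sigma$-closed, of size $\aleph_1$, and forces its generic union $c$ to be a total proper $2$-colouring of $\Gamma$. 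Being $\sigma$-closed it adds no reals, so $\Gamma^{V[G][c]}=\Gamma^{V[G]}$ and $c$ does colour the whole graph. Here the countable colouring number is used to set up $\colouringPoset{2}{\Gamma}$ as above, while the omission of some $K_{n,\omega_1}$ is used to show that $\colouringPoset{2}{\Gamma}$ has no large sunflower-type antichains, so that, after factoring through the connected-component structure of $\Gamma$ --- on which the poset is essentially a product of mutually generic two-valued choices --- it is weakly homogeneous in the sense needed below and names for reals involve only countably many components. Let $M:=\HOD(\Reals\cup\{c\})^{V[G][c]}$. Then $M\models\ZF$ immediately, $M\models\DC$ by Solovay's argument run through the $\sigma$-closed factor, and $c\in M$ is a proper $2$-colouring of $\Gamma$; for $\Gamma_0$ this $c$ is the promised parity function.

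The remaining task, and the heart of the proof, is to show $M$ has no nonprincipal ultrafilter on $\nat$ and no $\VitEq$-transversal. I would do this via a Solovay-style structure theorem for the sets of reals of $M$ that accommodates the new object $c$: any such set is definable over $M$ from $c$, a real $r\in V[G\restriction\beta]$ for some $\beta<\kappa$, and ordinals; over $V[G\restriction\beta]$ the tail of the forcing is the Levy collapse followed by $\colouringPoset{2}{\Gamma}$, and, using the weak homogeneity secured by the $K_{n,\omega_1}$ hypothesis, one shows that every set of reals of $M$ is the symmetric difference of a set definable over $N_0$ with one whose membership question for a real $x$ depends only on $N_0$-definable data together with the single bit that $c$ contributes to the connected component of $x$ (namely which of its two proper $2$-colourings that component receives). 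The conclusion is then a matter of the rigidity of ultrafilters and transversals against such meagre information. Given a nonprincipal ultrafilter $\mathcal U\in M$, the $\VitEq$-invariant set $\{x\in 2^\nat:\{n:x(n)=1\}\in\mathcal U\}$ has the above form, but its dependence on $c$ is nontrivial only if the component bits are coherent across $\VitEq$-classes, which fails by the mutual genericity of the component choices; hence that set is definable over $N_0$, and therefore Lebesgue measurable, contradicting the classical fact that a nonprincipal ultrafilter on $\nat$ yields a non-measurable set of reals. Given a $\VitEq$-transversal $D\in M$, the colour-flip automorphisms of $\colouringPoset{2}{\Gamma}$ --- which fix all reals and ordinals and, on each component, act transitively within each colour class --- show that the single bit $c$ contributes to a $\VitEq$-class cannot single out a representative of it, so $D$ is again definable over $N_0$, contradicting that the Solovay model $N_0$ has no $\VitEq$-transversal. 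Making this analysis of the role of $c$ precise --- isolating exactly what it can and cannot define, and promoting the heuristic to an honest reflection theorem even though $\colouringPoset{2}{\Gamma}$ is not weakly homogeneous on the nose --- is the step I expect to be the main obstacle, and it is precisely there that the two hypotheses on $\Gamma$ are used in earnest.
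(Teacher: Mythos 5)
Your route is genuinely different from the paper's (which forces with a Suslin $2$-colouring poset over the symmetric Solovay model and runs the Larson--Zapletal balanced-forcing machinery), but as it stands it has a real gap at its centre: the ``Solovay-style structure theorem'' for the sets of reals of $M=\HOD(\Reals\cup\{c\})^{V[G][c]}$ is asserted, not proved, and you yourself flag it as the main obstacle. That lemma is exactly where all of the difficulty lives. The colouring poset is badly non-homogeneous, and nothing in your sketch explains how the hypothesis that some $K_{n,\omega_1}$ does not embed would deliver the ``weak homogeneity'' you invoke, why names for reals should concentrate on countably many components, why the nontrivial dependence of an ultrafilter-coding set on $c$ is ruled out by ``mutual genericity of the component choices,'' or why a transversal in $M$ must be $N_0$-definable; for general $\Gamma$ the components are not even $\VitEq$-classes, so the ``single bit per class'' picture is specific to the Hamming graph at best. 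In the paper this is precisely the work done by the technical apparatus: the $K_{n,\omega_1}$ hypothesis enters through a pigeonhole-plus-Shoenfield argument showing the balanced virtual conditions (total ground-model colourings) are Bernstein-balanced, after which no nonprincipal ultrafilter exists in the extension by a cited theorem, while compact balance of the classification by the compact space of total $2$-colourings yields $\card{2^\nat / \VitEq} > 2^{\aleph_0}$ and hence no transversal. Your HOD-submodel is also not obviously interchangeable with the paper's model, and you would additionally owe a proof that $\DC$ holds in $M$; so the proposal currently reduces the theorem to an unproved, and as formulated doubtful, lemma rather than proving it.

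The specialization to the parity-function theorem is also flawed as written. You claim the Hamming graph has colouring number $\aleph_1$ because every vertex has $\aleph_0$ neighbours; this confuses colouring number with degree and, worse, contradicts the hypothesis (countable colouring number) of the very general statement you propose to apply. In fact the Hamming graph does have countable colouring number: its connected components are the countable $\VitEq$-classes, so wellorder the components and enumerate each in order type $\omega$; every vertex then has only finitely many earlier neighbours. Likewise, $K_{2,2}$ does embed in the Hamming graph (two vertices at Hamming distance $2$ have exactly two common neighbours); what is true and what the argument needs is that $K_{2,\omega_1}$ (indeed $K_{2,3}$) does not embed. These slips are fixable, but as stated your reduction of the theorem to the general statement does not go through.
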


The other direction in which the hat game can be generalized is with hats of more than one colour.
For finitely-many colours (and players), essentially the same strategy as for two colour works to ensure at most one error: Players agree
on a fixed enumeration $0, 1, \ldots, k - 1$ of the possible colours and the first one states the sum, modulo the number of colours, of the colours
of the hats which he or she sees.
This can be generalized to infinitely-many people as in the case of two colours---leading to the notion of a \emph{$k$-arity function}, and
again the existence of either a nonprincipal ultrafilter on $\nat$ or of an $\VitEq$-transversal suffices to construct one of these functions.
We prove also that the existence of a $k$-arity function does not imply the existence of either a nonprincipal ultrafilter on $\nat$ or of an
$\VitEq$-transversal, by a similar argument as in the case of two colours.

Parity functions can be considered as $2$-colourings of a certain graph, the \emph{Hamming graph}, which has all infinite sequences of zeroes and ones as its vertices and which
connects two vertices by an edge precisely when they differ by a single bit.
Because it requires little extra effort, we actually prove a much more general result than what was stated above, and show that for
any sufficiently-nice bipartite graph $\Gamma$ on the set of infinite sequences of zeroes and ones (and again assuming the consistency
of an inaccessible cardinal), there is a model of set theory in which there is a $2$-colouring of $\Gamma$ but there is neither a nonprincipal
ultrafilter on $\nat$ nor an $\VitEq$-transversal.

\section{Preliminaries}

Our terminology and notation are mostly standard and can be found in references such as \cite{kunen}, \cite{jech}, and \cite{kechris-classical}.
One point of clarification is that for us, a \emph{poset} is a preordered set (i.e. a set together with a reflexive and transitive relation), not a partially ordered set.
We make extensive use of techniques of so-called \emph{geometric set theory}, for which see~\cite{larson-zapletal-GST}, and mostly follow the terminology and notational conventions established in that book.

We shall use the following for technical reasons.
\begin{thm}[Lusin-Novikov (for a proof see {\cite[18.10]{kechris-classical}})] \label{lusin-novikov}
Let $R$ be a Borel relation between standard Borel spaces $X$ and $Y$.
If $R$ has countable vertical sections, then there is a countable family $\{ f_n \mathrel{:} n \in \nat \}$ of Borel partial functions from $X$ to $Y$ such that $R = \bigcup_{n \in \nat} f_n$.
\end{thm}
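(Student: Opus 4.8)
The plan is to reduce to a \emph{closed} relation on Baire space, perform a fiberwise Cantor--Bendixson analysis, and control the length of the induced transfinite process by a boundedness argument; the pervasive technical ingredient --- and the step I expect to be the real obstacle --- is the fact that an analytic set all of whose vertical sections are countable is automatically Borel, with Borel projection. I would first reduce. Since every standard Borel space is Borel isomorphic to a subset of $\nat^\nat$ and Borel partial functions restrict and compose, we may take $Y = \nat^\nat$ (and, if convenient, $X = \nat^\nat$). Next, using the standard representation of a Borel set as an injective continuous image of a closed subset of Baire space, write $R = \pi[F]$ with $F \subseteq X \times Y \times \nat^\nat$ closed, $\pi$ the projection onto the first two coordinates, and $\pi \restriction F$ injective. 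Injectivity puts each section $F_x$ in bijection with $R_x$, so $F$ is a closed relation all of whose sections are countable; and if $F = \bigcup_n \mathrm{graph}(h_n)$ for Borel partial functions $h_n \colon X \to Y \times \nat^\nat$, then $x \mapsto \bigl(\text{the } Y\text{-component of } h_n(x)\bigr)$ has analytic graph and Borel domain, hence Borel graph, and these functions' graphs union to $R$. So it suffices to decompose, into countably many Borel partial functions, a \emph{closed} $R \subseteq \nat^\nat \times \nat^\nat$ all of whose sections $R_x$ are countable --- equivalently, contain no perfect set.

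Now the core. For $S \subseteq \nat^\nat$ write $S'$ for the set of its non-isolated points, and set $R^{(0)} = R$, $R^{(\alpha+1)} = \{(x,y) \in R^{(\alpha)} \colon y \in (R^{(\alpha)}_x)'\}$, and $R^{(\lambda)} = \bigcap_{\alpha<\lambda} R^{(\alpha)}$ at limits, so that $R^{(\alpha)}_x = (R_x)^{(\alpha)}$ is the $\alpha$-th Cantor--Bendixson derivative of the countable set $R_x$. Each $R^{(\alpha)}$ is Borel: the one-step fiberwise derivative is \emph{a priori} only analytic, but --- invoking the italicized fact above, since its sections are again countable --- it is Borel, and limits are countable intersections. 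For $\alpha < \omega_1$ and $s \in \nat^{<\nat}$ define a partial function $f_{\alpha,s}$ by letting $f_{\alpha,s}(x)$ be the unique element of $R^{(\alpha)}_x \cap N_s$ when that set is a singleton ($N_s$ the basic clopen neighborhood given by $s$), undefined otherwise. Every $(x,y) \in R$ is captured: with $\alpha$ the Cantor--Bendixson rank of $y$ in $R_x$, the point $y$ is isolated in $R^{(\alpha)}_x$, and any $s$ witnessing that isolation gives $f_{\alpha,s}(x) = y$. Hence $R = \bigcup_{\alpha<\omega_1}\bigcup_s \mathrm{graph}(f_{\alpha,s})$.

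Two things remain. \textbf{(i) Cutting the union down to countably many indices.} Because $R$ is Borel with countable sections, the fiberwise Cantor--Bendixson kernels $\bigcap_{\alpha<\omega_1}(R_x)^{(\alpha)}$ are perfect subsets of $R_x$, hence empty; so $\langle R^{(\alpha)}\rangle_{\alpha<\omega_1}$ is a decreasing $\omega_1$-sequence of Borel sets with empty intersection. The point is that it is already $\emptyset$ at some \emph{countable} stage $\gamma$: the map $x \mapsto \mathrm{CBrank}(R_x)$ is, up to the usual coding, a $\boldsymbol{\Pi}^1_1$-norm on the (analytic --- in fact Borel) set $\mathrm{proj}_X(R)$, since one produces Borel codes for $R^{(\alpha)}$ recursively from a code for $\alpha$, so by $\boldsymbol{\Sigma}^1_1$-boundedness it is bounded. \textbf{(ii) Borelness of the pieces.} The graph of $f_{\alpha,s}$ is Borel provided $\{x \colon R^{(\alpha)}_x \cap N_s \neq \emptyset\}$ and $\{x \colon \card{R^{(\alpha)}_x \cap N_s} \le 1\}$ are Borel rather than merely analytic and co-analytic; this, like the Borelness of the derivatives used above, is an instance of the fact that analytic sets with countable sections behave like Borel sets (Borel projections, Borel ``the section has size $\le n$'' sets). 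I expect \textbf{(ii)} to be the heart of the matter; it is proved in tandem with the theorem itself, either by a reflection/separation argument in the spirit of Lusin's second separation theorem or, more slickly, by running the whole construction inside the Gandy--Harrington topology on lightface analytic sets.

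Granting (i) and (ii), $R = \bigcup_{\alpha<\gamma}\bigcup_{s\in\nat^{<\nat}} \mathrm{graph}(f_{\alpha,s})$ is a countable union of Borel partial-function graphs; re-enumerating these as $\{f_n \colon n \in \nat\}$ finishes the analysis of the closed relation (one may further intersect each graph with $R$ --- still a Borel partial-function graph --- to get exact equality, and note as a byproduct that $\mathrm{proj}_X(R) = \bigcup_n \mathrm{dom}(f_n)$ is Borel). Transporting this decomposition of $F$ back through the injective projection $\pi$, as in the reduction step, yields the required countable family of Borel partial functions from $X$ to $Y$ whose union is the original $R$.
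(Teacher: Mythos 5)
The paper does not prove this statement at all---it is quoted verbatim from Kechris [18.10]---so the only question is whether your outline stands on its own, and it does not: the load-bearing step is missing, and the general fact you invoke for it is false as stated. Your italicized claim that ``an analytic set all of whose vertical sections are countable is automatically Borel, with Borel projection'' has an immediate counterexample: take $S \subseteq \nat^\nat$ analytic but not Borel and let $A = \{(x,x) : x \in S\}$; every vertical section of $A$ has at most one point, $A$ is analytic, yet $A$ is not Borel (else $S = \mathrm{proj}_X(A)$ would be Borel by Lusin--Suslin, since the projection is injective on $A$), and its projection $S$ is not Borel either. The correct statements are exactly the Lusin--Novikov circle of results: a \emph{Borel} set with countable sections has Borel projection, and an analytic set with countable sections is \emph{covered} by countably many Borel graphs---both of which are consequences of, not inputs to, the theorem you are proving.

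Because of this, the Cantor--Bendixson scheme is circular as written. For closed $R$ the one-step fiberwise derivative is only $\Sigma^1_1$, the graphs of your $f_{\alpha,s}$ are only $\Pi^1_1$ (the clause ``$R^{(\alpha)}_x \cap N_s$ is a singleton'' is coanalytic), and the only routes I know to their Borelness---reflection/second-separation arguments, or the effective fact that every point of a countable $\Sigma^1_1(x)$ section is $\Delta^1_1(x)$ followed by enumeration of $\Delta^1_1(x)$-codes---are precisely proofs of Lusin--Novikov itself; indeed the latter is the proof in the cited source. You flag item (ii) as ``the heart of the matter'' and defer it to ``a reflection/separation argument or the Gandy--Harrington topology,'' but that deferred step \emph{is} the theorem, so the proposal reduces the statement to an unproved claim of at least equal strength (and, in the general form you state it, a false one). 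The outer reduction to a closed relation with injective projection, the transport of the decomposition back through Lusin--Suslin, and the boundedness argument in (i) (which can be run via the Cantor--Bendixson rank as a $\Pi^1_1$-rank on codes of countable closed sets, independently of the problematic Borelness) are fine, but they do not touch the gap.
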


For us the \emph{axiom of dependent choices}, abbreviated $\DC$, states that for any relation $R$ on a set $X$ satisfying the hypothesis that for every $x \in X$ there is $y \in X$ with $x \mathrel{R} y$, there is a countable sequence
$\langle x_n \mathrel{:} n \in \nat \rangle$ such that for each $n$, $x_n \mathrel{R} x_{n+1}$.
By a model of set theory we mean a model of $\ZF + \DC$.
In particular, we do not assume that the full axiom of choice holds in our models of set theory, though we do insist that $\DC$ holds.
This weaker form of choice suffices for much of analysis and prevents pathologies such as the set of real numbers being a countable union of countable sets (for which possibility see \cite[Theorem 10.6]{jech-ac}).
Two references among many for the axioms of $\ZF$ are \cite{kunen} and \cite{jech}.
We assume the full axiom of choice in the metatheory.

\subsection{The Symmetric Solovay Model $W$}

The \emph{symmetric Solovay model}, denoted $W$, is a natural starting point for independence results over models of $\ZF + \DC$.
It is in particular a model of $\DC$, and was constructed by Solovay~\cite{solovay} to demonstrate the possibility, upon dropping the axiom of choice, of all sets of reals being Lebesgue measurable.
In particular, in $W$ there is no nonprincipal ultrafilter on $\nat$ and no $\VitEq$ transversal, as no example of either of these objects can be Lebesgue measurable.

From this point forward let $\lambda$ be a fixed inaccessible cardinal.
Fix also a $V$-generic filter $\mathfrak G$ for the Lévy-collapse forcing $\Coll{\aleph_0}{<\lambda}$, which collapses all cardinals below $\lambda$ to be countable and collapses $\lambda$ to be $\aleph_1$.
Though it was not Solovay's original definition (for which see \cite{solovay}), following \cite{larson-zapletal-GST} we define $W = \HOD(V \cup 2^\nat)^{V[\mathfrak G]}$.
Note that here the set $2^\nat$ is computed in $V[\mathfrak G]$, not in $V$.

\subsection{Elementary Forcing}

Because we are interested in models of $\ZF + \DC$, we want any forcing extensions of these models to preserve $\DC$.
For us this is accomplished as follows.

\begin{dfn}
A poset $P$ is \textit{$\sigma$-closed} iff for every infinite descending chain $p_0 \ge p_1 \ge p_2 \ge \cdots$ in $P$ there is a condition $p$ with $p \le p_i$ for every $i$.
\end{dfn}

\begin{prop}[\emph{see, e.g. {\cite{aspero-karagila}}}]
If $P$ is a $\sigma$-closed poset, $M \models \DC$, and $G$ is $M$-generic for $P$, then $M[G] \models \DC$.
\end{prop}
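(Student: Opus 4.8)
The plan is to reduce $\DC$ in $M[G]$ to a density statement about $P$ and then invoke the forcing theorem; all of this is available over $M \models \ZF + \DC$ with no further use of choice. So let $R$ be a relation on a set $X$ in $M[G]$ satisfying the hypothesis of $\DC$. Fix $P$-names $\dot R$, $\dot X$ and, using the truth lemma, a condition $p \in G$ forcing that $\dot R$ is a relation on $\dot X$ with every element of $\dot X$ being $\dot R$-related to some element of $\dot X$; in particular $p \Vdash \dot X \neq \emptyset$. It suffices to show that the set of conditions forcing ``there is a function $f : \check\nat \to \dot X$ with $f(n) \mathrel{\dot R} f(n+1)$ for all $n$'' is dense below $p$, since then genericity produces such an $f$ in $M[G]$, which is precisely a $\DC$-witness for $(X, R)$; as every instance of $\DC$ in $M[G]$ is of this form, this gives $M[G] \models \DC$.

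To prove the density claim, fix an arbitrary $q \le p$ and work in $M$. I would recursively build a descending sequence $q \ge p_0 \ge p_1 \ge \cdots$ of conditions and a sequence of names $\langle \tau_n : n \in \nat \rangle$ such that $p_n \Vdash \tau_n \in \dot X$ and $p_{n+1} \Vdash \tau_n \mathrel{\dot R} \tau_{n+1}$ for every $n$. To begin, $q \le p$ forces $\dot X \neq \emptyset$, so there are $p_0 \le q$ and a name $\tau_0$ with $p_0 \Vdash \tau_0 \in \dot X$. At the successor step, since $p_n \Vdash \tau_n \in \dot X$ and $p_n \le p$, we have $p_n \Vdash \exists y \in \dot X\,(\tau_n \mathrel{\dot R} y)$, so some $p_{n+1} \le p_n$ together with some name forces a witness, which we take as $\tau_{n+1}$. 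Only the basic forcing theorem is used here---notably \emph{not} the maximal principle, whose usual proof could consume more than $\DC$ in $M$.

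Two points need attention before invoking $\DC$ in $M$ to run this recursion. First, the $P$-names form a proper class, so I would observe that the $\tau_n$ may be taken of rank below $\mathrm{rank}(\dot X)$: whenever $r \Vdash \sigma \in \dot X$, there are $r' \le r$ and a name $\rho$ occurring in $\dot X$ with $r' \Vdash \sigma = \rho$, so at each step we may shrink $p_{n+1}$ and replace $\tau_{n+1}$ by such a $\rho$. With this restriction the finite approximations $\langle (p_0, \tau_0), \dots, (p_n, \tau_n) \rangle$ range over a fixed set; that set, ordered by end-extension, is a tree with no terminal node, so $\DC$ in $M$ yields an infinite branch, namely the two sequences above. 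Second---and this is the one essential use of $\sigma$-closure---let $p_\omega$ be a condition below every $p_n$, and let $\dot f$ be the name for the function $n \mapsto \tau_n$. Then $p_\omega$ forces that $\dot f$ is a function from $\check\nat$ into $\dot X$ with $\dot f(n) \mathrel{\dot R} \dot f(n+1)$ for all $n$, and $p_\omega \le q$. Since $q \le p$ was arbitrary, the density claim, and with it the proposition, follows.

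The main obstacle is essentially bookkeeping rather than insight: arranging that the recursion data form a set (the rank bound above is the standard device) and being careful to establish \emph{density} below $p$ rather than settling for the weaker fact that some condition forces the conclusion. Once those are handled the argument is the familiar one and transfers verbatim to the setting where only $\DC$, not the full axiom of choice, is assumed in $M$.
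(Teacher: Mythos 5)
Your proof is correct, and it is essentially the standard argument: the paper does not prove this proposition itself but cites Asper\'o--Karagila, where the same strategy (build the descending sequence of conditions and rank-bounded names by a recursion driven by $\DC$ in $M$, avoiding the maximal principle, then use $\sigma$-closure to find a single condition forcing the canonical name for the sequence to witness $\DC$, and conclude by a density-below-$p$ argument) is the one used. Your attention to the two choice-sensitive points---restricting names to a set such as $\dom{\dot X}$ so that $\DC$ in $M$ applies, and not invoking mixing---is exactly what makes the argument go through over $\ZF + \DC$.
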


A poset $\langle P, \le \rangle$ is \emph{Suslin} iff there is a Polish space $X$ over which $P$, $\le$, and $\perp$ are all analytic.
This definability requirement allows one to speak coherently about what it means to be a condition of a Suslin forcing in a generic extension.

We say that a property $\Phi$ of a Suslin forcing $P$ holds \emph{cofinally} iff for every generic extension $V[G]$ there is a further generic extension $V[G][H]$ in which $P$ has property $\Phi$.

\subsection{Balanced, Bernstein-Balanced, and Compactly-Balanced Forcing}

We now turn to the technical apparatus of our proofs, which is a body of techniques known as \emph{geometric set theory} \cite{larson-zapletal-GST}.
In geometric set theory one works with forcing notions which satisfy desirable properties with respect to not just conditions in the model at hand, but also objects which become conditions in certain generic extensions.
These objects are known as \emph{virtual conditions}, and we shall need to establish some preliminary definitions before defining some of the core classes of forcing notions studied in geometric set theory.

Virtual conditions are, intuitively, objects which exist in $V$ and describe potential conditions of a Suslin forcing which are guaranteed to be consistent across forcing extensions where they are realized.
More concretely, they determined by pairs\footnote{Specifically, they are equivalence classes of such pairs under a relation of \emph{virtual equivalence}, for which see \cite{larson-zapletal-GST}} $\langle Q, \dot p \rangle$
such that $Q$ forces that $\dot p$ is a condition in $P$ and $Q \times Q \Vdash \dot p / \dot G_0 \equiv \dot p / \dot G_1$, where $\dot G_1$ and $\dot G_2$ are canonical names for the projection of the generic filter name $\dot G$ onto
the first and second coordinates, respectively; $\equiv$ is the equivalence relation induced by the preorder $P$, and for $G$ a generic filter and $\dot q$ a name $\dot q / G$ is the interpretation of $\dot q$ in the generic extension
$V[G]$.
A virtual condition $\langle Q, \dot p \rangle$ is \emph{realized} precisely in those forcing extensions whose posets are forcing equivalent to posets of the form $Q \times R$ for some $R$.
See \cite{larson-zapletal-GST} for the detailed definition and basic properties of virtual conditions.

\begin{dfn}[{\cite[Definitions 5.2.1,10]{larson-zapletal-GST}}] 
Let $P$ be a Suslin forcing.
A virtual condition $\overline p$ of $P$ is \emph{balanced} iff for all pairs $V[G]$, $V[H]$ of mutually-generic extensions of $V$ and all conditions $p \in V[G]$, $q \in V[H]$ with $p, q \le \overline p$ in some generic extension
$V[K]$ realizing $\overline p$ and containing the generic filters $G$ and $H$, $p$ and $q$ are compatible in $V[K]$.
Working in $V$ again, we say the poset $P$ is \emph{balanced} iff for every condition $p \in P$ there is a balanced virtual condition $\overline p \le p$.
\end{dfn}

The fact that a notion of forcing is balanced generally finds use in proofs via contradiction.
One assumes that some object, such as a nonprincipal ultrafilter on $\nat$ or an $\VitEq$-transversal, exists in a balanced extension
of $W$, and then finds a pair of conditions below a balanced virtual condition which are incompatible, thereby contradicting that the extension was balanced.
Of fundamental importance to establishing such contradictions is the following.

\begin{prop}[{\cite[Proposition 5.2.4]{larson-zapletal-GST}}]
If $\langle Q, \dot p \rangle$ determines a balanced virtual condition, $\lvert Q \rvert < \lambda$, $\dot G$ is the canonical $P$-name for a $P$-generic filter, and $\phi(x)$ is a formula with paremeters in $V$,
then $\dot p$ determines the truth or falsity of $\phi(\dot G)$, in the sense that one of the following holds:
\begin{itemize}
\item $Q \Vdash \Coll{\aleph_0}{< \lambda} \Vdash \dot p \Vdash_P W[\dot G] \models \phi(\dot G)$, \textrm{or}
\item $Q \Vdash \Coll{\aleph_0}{< \lambda} \Vdash \dot p \Vdash_P W[\dot G] \models \neg \phi(\dot G)$.
\end{itemize}
\end{prop}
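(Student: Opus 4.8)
The plan is to argue by contradiction: assuming both displayed items fail, I would produce a pair of incompatible conditions below a common realization of $\overline{p}$ (the virtual condition determined by $\langle Q, \dot p \rangle$) in a suitable amalgamated extension, contradicting balance.

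So suppose neither item holds. I would first extract two ``scenarios''. Since $\card{Q} < \lambda$, forcing with $Q$ preserves the inaccessibility of $\lambda$ and $\Coll{\aleph_0}{<\lambda} \cong Q \times \Coll{\aleph_0}{<\lambda}$, so in a L\'evy-collapse extension of $V$ a realization $\dot p / g$ of $\overline{p}$ is available, $g$ being the $Q$-factor of the collapse generic. Failure of the first item then yields a $\Coll{\aleph_0}{<\lambda}$-generic $\mathfrak G_0$ over $V$, with $Q$-factor $g_0$, such that in $W_0 := \HOD(V \cup 2^\nat)^{V[\mathfrak G_0]}$ there is a condition $p_0 \le \dot p / g_0$ with $p_0 \Vdash_P W_0[\dot G] \models \neg\phi(\dot G)$; note that $p_0 \in W_0 \subseteq V[\mathfrak G_0]$. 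Symmetrically, failure of the second item yields $\mathfrak G_1$, $g_1$, $W_1 := \HOD(V \cup 2^\nat)^{V[\mathfrak G_1]}$, and $p_1 \le \dot p / g_1$ with $p_1 \Vdash_P W_1[\dot G] \models \phi(\dot G)$ and $p_1 \in W_1 \subseteq V[\mathfrak G_1]$. Each scenario only requires its collapse generic to extend fixed conditions, so I would take $\mathfrak G_0$ and $\mathfrak G_1$ to be mutually generic over $V$, e.g.\ as the two coordinates of a $(\Coll{\aleph_0}{<\lambda} \times \Coll{\aleph_0}{<\lambda})$-generic.

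Next I would amalgamate. Set $V[K] := V[\mathfrak G_0 \times \mathfrak G_1]$. Since $\Coll{\aleph_0}{<\lambda} \times \Coll{\aleph_0}{<\lambda} \cong \Coll{\aleph_0}{<\lambda} \cong Q \times \Coll{\aleph_0}{<\lambda}$, the extension $V[K]$ realizes $\overline{p}$, with realized condition $\dot p / g_0$. By mutual genericity $g_0 \times g_1$ is $(Q \times Q)$-generic over $V$, so the coherence property $Q \times Q \Vdash \dot p / \dot G_0 \equiv \dot p / \dot G_1$ gives $\dot p / g_1 \equiv \dot p / g_0$ in $V[K]$ (using also the Suslinness of $P$, so that $\le$ and $\equiv$ are absolute); hence $p_1 \le \dot p / g_0$ there, and trivially $p_0 \le \dot p / g_0$. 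Now balance of $\overline{p}$ applies: $V[\mathfrak G_0]$ and $V[\mathfrak G_1]$ are mutually generic extensions of $V$, the conditions $p_0 \in V[\mathfrak G_0]$ and $p_1 \in V[\mathfrak G_1]$ are both $\le \overline{p}$ in the extension $V[K]$, which realizes $\overline{p}$ and contains $\mathfrak G_0$ and $\mathfrak G_1$, so $p_0$ and $p_1$ are compatible in $V[K]$; fix $r \in P^{V[K]}$ with $r \le p_0$ and $r \le p_1$.

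The hard part will be turning this into a contradiction, since $p_0$ and $p_1$ were chosen to force incompatible things, but over the two \emph{different} Solovay models $W_0$ and $W_1$. Here I would appeal to the weak homogeneity of the L\'evy collapse together with the definition of the symmetric Solovay model (basic facts developed in \cite{larson-zapletal-GST}): for $p \in P$ and a formula $\phi$ with parameters in $V$, the assertion ``$p \Vdash_P W[\dot G] \models \phi(\dot G)$'' is a property of $V$, $p$, and $\phi$ alone --- independent of which Solovay extension is used to compute $W$ and absolute between it and further generic extensions. Granting this, both ``$p_0 \Vdash_P W[\dot G] \models \neg\phi(\dot G)$'' and ``$p_1 \Vdash_P W[\dot G] \models \phi(\dot G)$'' hold in $V[K]$ (with $W$ the Solovay model of $V[K]$), and since $r$ extends both $p_0$ and $p_1$ the condition $r$ forces both $W[\dot G] \models \phi(\dot G)$ and $W[\dot G] \models \neg\phi(\dot G)$ --- absurd. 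I expect the genuine obstacles to be precisely this homogeneity/absoluteness input and the verification that the amalgam realizes $\overline{p}$ with $p_0$ and $p_1$ below a common realized condition, where the coherence clause of the virtual-condition definition does the essential work; once those are secured, the rest is just unwinding the definition of balance.
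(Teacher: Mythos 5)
The paper itself offers no proof of this proposition---it is imported verbatim from Larson--Zapletal (Proposition 5.2.4)---and your sketch is essentially the argument of that cited proof: negate both alternatives, realize the two scenarios in mutually generic collapse extensions, amalgamate via the absorption facts $Q \times \Coll{\aleph_0}{<\lambda} \cong \Coll{\aleph_0}{<\lambda} \cong \Coll{\aleph_0}{<\lambda} \times \Coll{\aleph_0}{<\lambda}$, use the coherence clause of virtual conditions together with balance to obtain a common lower bound $r \le p_0, p_1$, and contradict the decision of $\phi$ using the homogeneity-based invariance of the Solovay model. The only point you should make explicit in a full write-up is the step that licenses that invariance: since $P$ is Suslin, the conditions $p_0, p_1$ are coded by reals and hence, by the $\lambda$-cc of $\Coll{\aleph_0}{<\lambda}$, already lie in intermediate extensions of $V$ by posets of size $<\lambda$; it is for parameters in such small intermediate models that the weak homogeneity of the tail collapse gives that ``$p_i \Vdash_P W[\dot G] \models \phi(\dot G)$'' has the same truth value in every symmetric Solovay model of $V$ containing them (in particular in $W_0$, $W_1$, and the Solovay model of $V[K]$), which is exactly the absoluteness your final paragraph invokes.
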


In order to demonstrate that a forcing is balanced it is often helpful to classify balanced pairs, and for that purpose the following
equivalence relation is useful, as it provides a means of reducing balanced pairs to balanced virtual conditions.

\begin{dfn}[{\cite[Definition 5.2.5]{larson-zapletal-GST}}]
Pairs $\langle Q, \dot q \rangle$, $\langle R, \dot r \rangle$ are \emph{balance-equivalent}, denoted
$\langle Q, \dot q \rangle \balanceEq \langle R, \dot r \rangle$ iff for all pairs
$\langle Q', \dot q' \rangle \le \langle Q, \dot q \rangle$, $\langle R', \dot r' \rangle \le \langle R, \dot r \rangle$,
\[ Q' \times R' \Vdash \exists q \in \dot q' \ \exists r \in \dot r' \ \exists p. \ p \le q, r. \]
\end{dfn}

That $\balanceEq$ is indeed an equivalence relation is established in \cite[prop. 5.2.6]{larson-zapletal-GST},
which also proves that if $\langle Q, \dot q \rangle \le \langle R, \dot r \rangle$, then
$\langle Q, \dot q \rangle \balanceEq \langle R, \dot r \rangle$.

An important property of balance equivalence is that every balance equivalence class includes a virtual condition, which is in fact
unique up to equivalence of virtual conditions, so when working with pairs up to balance equivalence it suffices to consider
virtual conditions.

\begin{prop}[{\cite[thm. 5.2.8]{larson-zapletal-GST}}]
For any Suslin forcing $P$, every balance equivalence class of pairs includes a virtual condition which is unique up to
equivalence of virtual conditions.
\end{prop}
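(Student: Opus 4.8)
The statement splits into an existence claim --- each balance equivalence class contains a pair satisfying the coherence requirement $Q \times Q \Vdash \dot p/\dot G_0 \equiv \dot p/\dot G_1$ --- and a uniqueness claim --- any two such virtual conditions lying in one balance equivalence class are virtually equivalent. I would dispatch uniqueness first, as it is a forcing-diagram argument using only definitions already in hand, and then construct the representative for the existence half.

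For uniqueness, let $\overline p = \langle Q, \dot p \rangle$ and $\overline r = \langle R, \dot r \rangle$ both be virtual conditions with $\overline p \balanceEq \overline r$. Virtual equivalence amounts to $Q \times R \Vdash \dot p/\dot G_0 \equiv \dot r/\dot G_1$, and since $\equiv$ is the symmetrization of the preorder it suffices, by symmetry, to exclude a condition $(q,r) \in Q \times R$ forcing $\dot p/\dot G_0 \not\le \dot r/\dot G_1$. I would pass below $(q,r)$, adjoin a second mutually generic copy of the $R$-side, and work in a generic extension by $Q \times R \times R$ with generics $G$, $H_0$, $H_1$, where $(q,r)$ lies in the $Q \times R$-part corresponding to $G$ and $H_0$. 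Coherence of $\overline r$ gives $\dot r/H_0 \equiv \dot r/H_1$ there; on the other hand, the definition of $\balanceEq$ applied to $\overline p$ against a strengthening of $\overline r$ tailored to witness the supposed failure of the inequality yields a single condition below both $\dot p/G$ and $\dot r/H_1$, and transporting it back along $\dot r/H_1 \equiv \dot r/H_0$ contradicts the choice of $(q,r)$. Running the same diagram with the two inequalities interchanged completes the argument; the only real care needed is the bookkeeping of names, projections and which copies are mutually generic.

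For existence, let $\langle Q, \dot q \rangle$ be an arbitrary pair. The definition of $\balanceEq$, taken with trivial strengthenings, already forces that across any two mutually generic extensions the realizations of $\dot q$ have a common lower bound, so the balance class consists of pairs whose realizations are uniformly pairwise compatible. The plan is to manufacture a coherent representative by saturating this compatibility: pass to a long product $Q^\kappa$ for $\kappa$ regular and large relative to the hereditary sizes of $Q$, $P$ and $\dot q$; obtain the pairwise compatible realizations $\langle q_i : i < \kappa \rangle$; and, using that $P$ is Suslin so each $q_i$ is a point of a fixed Polish space carrying a fixed Borel code for $\le$, extract by a reflection and pigeonhole argument a name $\dot q_\infty$ together with a $\sigma$-closed completion step such that $\langle Q^\kappa * \dot{\mathbb S}, \dot q_\infty \rangle$ refines cofinally many of the $q_i$ and, crucially, its two realizations across mutually generic copies agree up to $\equiv$ --- which is exactly the coherence requirement, and which follows because it is computed from the same Borel code in every extension and is therefore absolute between the copies. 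One then checks, via \cite[Proposition 5.2.6]{larson-zapletal-GST} and the fact that $\langle Q^\kappa * \dot{\mathbb S}, \dot q_\infty \rangle$ projects onto $\langle Q, \dot q \rangle$, that the constructed virtual condition lies in the original balance class. The genuinely delicate point, and the main obstacle, is arranging that the representative be \emph{coherent} rather than merely having pairwise compatible realizations: forcing the saturation to close off as a single $\equiv$-class instead of leaking into a descending chain of distinct conditions is where the Suslin definability of $P$ has to be used in an essential way.
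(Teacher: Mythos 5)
This proposition is quoted from \cite[Theorem 5.2.8]{larson-zapletal-GST}; the paper gives no proof of it, so your attempt can only be judged against what the statement requires, and there it has real gaps in both halves. The existence half is the serious one. You never actually construct $\dot q_\infty$: the ``reflection and pigeonhole'' step and the unexplained $\sigma$-closed factor $\dot{\mathbb S}$ are placeholders, and you concede at the end that the coherence requirement---the entire content of being a virtual condition---is left open. Worse, the plan points in the wrong direction. In the poset $P$ of this paper (or in the $\VitEq$-transversal poset), conditions are countable objects, while the realizations $q_i$ of a nontrivial balanced pair over $\kappa$ many mutually generic filters are in general pairwise distinct countable partial colourings whose union has size $\kappa$; no single condition can refine cofinally many of them, so the desired $\dot q_\infty$ simply does not exist. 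The representative of a balance class is not obtained by saturating downward but by extracting the \emph{stable} part of the name: balance forces the value of $\dot q$ at each ground-model point to be decided, the map $x \mapsto c_x$ is read off in $V$, and the virtual condition is of the form $\langle \Coll{\aleph_0}{X}, \check c \rangle$---a pair which is in general weaker than the realizations, not stronger. This is exactly the pattern of Lemma~\ref{bvc-classification-2-colouring-poset}.

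The uniqueness half has a localized but genuine flaw: you suppose a condition forces $\dot p/\dot G_0 \not\le \dot r/\dot G_1$ and then ``contradict'' it by producing a common lower bound of $\dot p/G$ and $\dot r/H_1$ and transporting it along $\dot r/H_1 \equiv \dot r/H_0$. A common lower bound only witnesses \emph{compatibility} of $\dot p/G$ with $\dot r/H_0$, which does not contradict $\dot p/G \not\le \dot r/H_0$; balance equivalence never manufactures the inequality itself. Indeed, with $\equiv$ read literally as the equivalence induced by the preorder, the statement you are aiming at fails: for the Hamming graph take a total $2$-colouring $c$ and $d = c \restriction (2^\nat \setminus \{x_0\})$; both $\langle \Coll{\aleph_0}{2^\nat}, \check c\rangle$ and $\langle \Coll{\aleph_0}{2^\nat}, \check d\rangle$ are coherent and balanced (the colour of $x_0$ is forced by its $d$-coloured neighbours, so the compatibility argument of Lemma~\ref{bvc-classification-2-colouring-poset} applies verbatim) and they are balance-equivalent, yet their realizations $c \ne d$ are not $\equiv$, since the order is reverse inclusion. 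The uniqueness in \cite{larson-zapletal-GST} is up to the coarser, separative-style equivalence of virtual conditions, and the correct diagram strengthens the $\overline p$ side to a condition $s \le \dot p/G$ \emph{incompatible} with $\dot r/H_0$, then plays balance equivalence against a second mutually generic copy $H_1$ and uses coherence of $\overline r$ to contradict that incompatibility. Your sketch strengthens the $\overline r$ side and never isolates an incompatibility witness, so the contradiction does not materialize as written.
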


\subsection{Bernstein Balanced Forcing}

\begin{dfn}
Let $P$ be a Suslin forcing.
A virtual condition $\overline p$ is \emph{Bernstein-balanced} iff for every generic extension $V[G]$, every $p \le \overline p$ in
$V[G]$, every poset $Q \in V$ such that $\powset{Q} \cap V$ is countable, and for every perfect collection $\mathcal H$ of filters
mutually generic over $Q$ there is a filter $H \in \mathcal H$ such that all elements of $P$ in $V[H]$ below $\overline p$ are compatible with $p$.
\end{dfn}

The definition of a Bernstein-balanced forcing is now as expected.

\begin{dfn}
A Suslin forcing $P$ is \emph{Bernstein-balanced} iff for every $p \in P$ there is a Bernstein-balanced virtual condition
$\overline p \le p$.
\end{dfn}

The following theorem is for us the main use of the notion of Bernstein balance.

\begin{thm}[{\cite[Theorem 12.2.3]{larson-zapletal-GST}}] \label{bernstein-no-ultra}
In cofinally Bernstein-balanced extensions of the symmetric Solovay model, there is no nonprincipal ultrafilter\footnote{In fact, more is true: there is no diffuse finitely-additive probability measure on $\nat$.} on $\nat$.
\end{thm}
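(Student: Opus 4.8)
The plan is a proof by contradiction resting on two pillars: the definability of arbitrary subsets of $\powset{\nat}$ in the symmetric Solovay model, which forces a hypothetical ultrafilter to be computed from a single real-plus-ordinal parameter, and a symmetry coming from the bit-flipping automorphism of Cohen forcing, whose compatibility consequences are irreconcilable with Bernstein balance. Informally, the ultrafilter would be ``too homogeneous'': being an ultrafilter it must decide every Cohen-generic subset of $\nat$, yet that decision is forced to be invariant under complementation.

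First I would fix the setting. Let $P$ be the Suslin forcing under consideration; by the hypothesis that $P$ is cofinally Bernstein-balanced, pass to a suitable generic extension of the ground model --- henceforth renamed $V$ --- over which $P$ is Bernstein-balanced, and recompute the symmetric Solovay model $W$. Suppose toward a contradiction that some $P$-generic extension of $W$ carries a nonprincipal ultrafilter on $\nat$; fix a $P$-name $\dot U$ and a condition $p$ forcing $\dot U$ to be such an ultrafilter over $W$, together with a Bernstein-balanced virtual condition $\overline p \le p$. Because every subset of $\powset{\nat}$ in $W[\dot G]$ is definable there from $\dot G$, a real, and an ordinal, there are a formula $\phi$, a real $z$, and an ordinal $\alpha$ with $p \Vdash$ ``$A \in \dot U \leftrightarrow \phi(A, z, \alpha, \dot G)$'' for all $A \subseteq \nat$ in $W[\dot G]$; by a routine reduction using the homogeneity of the Lévy collapse together with the cofinal hypothesis, I assume $z \in V$.

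The heart of the argument uses Cohen forcing $Q = 2^{<\omega}$ --- for which $\powset{Q} \cap V$ is countable in $W$ --- together with a perfect collection $\mathcal H = \{H_t \mathrel{:} t \in 2^\omega\}$ of pairwise mutually $Q$-generic filters over $V$; write $A_t \subseteq \nat$ for the generic set coded by $H_t$. I would first establish a \emph{decision lemma}: for each $t$, the virtual condition $\overline p$ forces a definite value to ``$\check A_t \in \dot U$''. This is where Bernstein balance enters: if $\overline p$ left this undecided, then (since $\overline p$ forces $\dot U$ to be an ultrafilter) there would be conditions $q_0, q_1 \le \overline p$ in $V[H_t]$ with $q_0 \Vdash \check A_t \in \dot U$ and $q_1 \Vdash \check A_t \notin \dot U$; feeding the perfect collection $\mathcal H$ and a suitably chosen condition $p^\ast \le \overline p$ into the defining clause of Bernstein balance returns a coordinate at which all $P$-conditions below $\overline p$ lying in the corresponding $V[H]$ are simultaneously compatible with $p^\ast$, and chasing a common lower bound --- exploiting that $\mathcal H$ is perfect to arrange that the returned coordinate is one at which both a ``$\in \dot U$'' witness and a ``$\notin \dot U$'' witness for the \emph{same} generic set live --- yields a single condition forcing both ``$\check A_t \in \dot U$'' and ``$\check A_t \notin \dot U$'', which is absurd. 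Granting the decision lemma, I finish with the bit-flipping automorphism $\pi_t$ of the $t$-th Cohen coordinate: $\pi_t$ sends $A_t$ to $\nat \setminus A_t$, fixes $V[H_t]$, fixes the parameters $z$ and $\alpha$ and the $P$-forcing relation, and fixes $\overline p$ (which was chosen before $Q$); so applying $\pi_t$ to the decision lemma shows that $\overline p$ assigns $\check A_t$ and $\nat \setminus \check A_t$ the same fate with respect to $\dot U$ --- both forced in, or both forced out --- either of which contradicts that $\overline p$ forces $\dot U$ to be an ultrafilter.

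I expect the genuine difficulty to lie in the decision lemma, and specifically in extracting an honest contradiction from the merely \emph{existential} clause ``there is $H \in \mathcal H$'' in the definition of Bernstein balance: that clause does not let one dictate which coordinate is returned, so the perfect-collection formulation (as opposed to a single generic filter) is essential and must be leveraged --- for instance by also passing to the perfect collection $\{\pi_t(H_t) \mathrel{:} t \in 2^\omega\}$ of complementary generics, or by running the argument against a perfect set of candidate conditions $p^\ast$ --- while being scrupulous about mutual genericity throughout; the crucial trap is that $A_t$ and $\nat \setminus A_t$ generate the \emph{same} extension $V[H_t]$ and so cannot both belong to a mutually generic family, which is exactly why the flip must be applied inside a single $V[H_t]$ rather than across $\mathcal H$. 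The precise choice of $p^\ast$ and the verification that it interacts correctly with $\dot U$ are the remaining bookkeeping. Finally, the stronger assertion of the footnote --- that there is no diffuse finitely-additive probability measure on $\nat$ --- follows by the same scheme with ``$\check A_t \in \dot U$'' replaced by ``$\dot\mu(\check A_t) > \tfrac12$'': the flip now forces $\dot\mu(\check A_t) = \tfrac12$ for every Cohen-generic $A_t$, and a contradiction is squeezed out from the diffuseness of $\dot\mu$ together with its behaviour on several mutually generic coordinates. The full details are carried out in \cite[\S 12.2]{larson-zapletal-GST}.
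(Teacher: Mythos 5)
A preliminary remark on the comparison itself: the paper contains no proof of Theorem~\ref{bernstein-no-ultra} --- it is imported verbatim from \cite[Theorem 12.2.3]{larson-zapletal-GST} and used as a black box --- so your proposal has to stand on its own, and on its own terms it has a genuine gap at exactly the point you flag, the ``decision lemma.'' Two steps there do not work as described. First, from ``$\overline p$ leaves `$\check A_t \in \dot U$' undecided'' you pass to the existence of conditions $q_0, q_1 \le \overline p$ \emph{in $V[H_t]$} forcing opposite decisions. Non-decision only provides such conditions in suitable larger extensions (for instance after a further collapse in which $\overline p$ is realized and the Solovay-model forcing relation is available); nothing in the setup guarantees that the small model $V[H_t]$ contains \emph{any} condition below $\overline p$ deciding the statement, and Bernstein balance is precisely a statement about conditions lying in the small models $V[H]$, so this mismatch undercuts the way you invoke it. Second, even granting such $q_0, q_1 \in V[H_t]$, the conclusion you extract from Bernstein balance is not what the definition gives: it yields some $H$ at which every condition below $\overline p$ in $V[H]$ is compatible with the one fixed condition $p^\ast$, never pairwise compatibility of conditions inside a single $V[H]$. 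Compatibility of $q_0$ with $p^\ast$ and of $q_1$ with $p^\ast$ produces two \emph{different} common lower bounds forcing opposite statements, which is no contradiction; ``a single condition forcing both $\check A_t \in \dot U$ and $\check A_t \notin \dot U$'' would require $q_0$ and $q_1$ to be compatible with each other, which they cannot be and which Bernstein balance does not assert. Your closing caveats (passing to the flipped family, a perfect set of candidate $p^\ast$'s) acknowledge the difficulty but do not repair it: what is missing is a mechanism that manufactures, inside each member of a perfect family of small models $V[H]$, a condition below $\overline p$ forcing something that contradicts one fixed, externally chosen condition --- that is where the full strength of Bernstein balance has to enter, and it is the actual content of the theorem.

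To give credit where due: the reduction of the parameter $z$ into $V$ via homogeneity of the collapse and the cofinal hypothesis is standard and fine, and the complementation-symmetry endgame, applied within a single $V[H_t]$ as you correctly insist, is sound \emph{conditionally} on the decision lemma --- indeed, once $\overline p$ decides membership in $\dot U$ of every Cohen-generic subset of $\nat$, the tail-flip automorphism alone already produces the contradiction, and the perfect collection plays no further role, which is itself a sign that the proposal has displaced rather than used the hypothesis. The same conditional status attaches to your sketch of the measure-theoretic footnote. As it stands, the entire weight of the argument rests on the unproven lemma, and deferring it to \cite[\S 12.2]{larson-zapletal-GST} means the proposal does not yet contain a proof of the theorem it addresses.
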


\subsection{Compactly-balanced Forcing}

A strengthening of the notion of balanced forcing guarantees that the Harrington-Kechris-Louveau dichotomy is reflected in the cardinalities of the quotient spaces of Borel equivalence relations.
The authors of~\cite{larson-zapletal-GST} call this \emph{preservation of the smooth divide}\footnote{Equivalence relations Borel reducible to $\id$ are often called ``smooth.''}.

\begin{dfn}[\cite{larson-zapletal-GST}]
A Suslin forcing $P$ is \emph{compactly balanced} iff it is balanced and its balanced virtual conditions are classified by a definable compact Hausdorff space $\langle B, \mathcal T \rangle$ such that downward cones of conditions
in $P$ are nonempty and closed, any balanced virtual condition realized in a generic extension $V[H]$ can be extended in any larger generic extension $V[G] \supseteq V[H]$, and the $\le$-relation between virtual conditions which are
elements of a generic extension $V[H]$ and those in a larger generic extension $V[G]$ is closed in $\mathcal T^{V[H]} \times \mathcal T^{V[G]}$ (from the perspective of $V[G]$).
\end{dfn}

For further details on the definition of compactly-balanced forcing, see \cite[Definition 9.2.1]{larson-zapletal-GST}.
The authors are intentionally noncommital about the precise meaning of the phrase ``definable Hausdorff space,'' though they do note that real parameters are permitted.
If one views $V$ as a countable transitive model of ZFC which exists within an ambient universe $\mathcal V$, then definability in $V$ with real parameters from $\mathcal V$ suffices.
The required definitions in this paper are clearly allowed, in any case.

\begin{thm}[{\cite[Theorem 9.2.2]{larson-zapletal-GST}}] \label{compact-balance-smooth-divide}
Let $P$ be a compactly-balanced forcing and $G$ be a $P$-generic filter over the symmetric Solovay model $W$.
Then $W[G]\models \card{2^\nat / \VitEq} > 2^{\aleph_0}$.
\end{thm}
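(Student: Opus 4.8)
The plan is to argue by contradiction, converting a hypothetically small quotient $2^\nat/\VitEq$ into a Borel reduction of $\mathbb{E}_0$ to the identity on a comeager set --- which is impossible, since $\mathbb{E}_0$ is not smooth (Harrington--Kechris--Louveau) and is in fact generically ergodic with respect to Cohen forcing. So suppose $W[G] \models \card{2^\nat/\VitEq} \le 2^{\aleph_0}$. Then in $W[G]$ there is a function $h \colon 2^\nat \to 2^\nat$ whose fibres are exactly the $\mathbb{E}_0$-classes, i.e.\ $x \VitEq y$ iff $h(x) = h(y)$. First I would invoke that $W$ is the symmetric Solovay model --- so that a statement about $W[G]$ reflects to a small Lévy collapse $V[g]$ with the $g$-forcing of size $<\lambda$ --- together with Proposition~5.2.4, to fix a $P$-name $\dot h$ and a balanced virtual condition $\overline p$, lying in the definable compact Hausdorff space $B$ of balanced virtual conditions, such that $\overline p$ forces (through $\Coll{\aleph_0}{<\lambda}$ and then $P$) that $\dot h / \dot G$ is such a reduction.

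Next I would adjoin an auxiliary Cohen real: let $\mathbb{C} = 2^{<\omega}$ with canonical name $\dot c$. Because $\card{\mathbb{C}} = \aleph_0 < \lambda$, the compactly-balanced machinery survives the passage to $V[g][c]$: the extension clause for balanced virtual conditions produces a balanced virtual condition $\overline q \le \overline p$ realized after adding $c$, and the closedness of the $\le$-relation in $\mathcal{T}^{V[g]} \times \mathcal{T}^{V[g][c]}$ keeps this coherent with $\overline p$. The central point is that $\overline q$ must decide $\dot h(\dot c)$: were two conditions below $\overline q$, taken in two mutually $P$-generic extensions of $V[g][c]$, to force incompatible values for $\dot h(\check c)$, those conditions would be incompatible, contradicting balance. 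Hence there is a single $\mathbb{C}$-name $\dot r \in V[g]$ with $\overline q \Vdash \dot h(\dot c) = \dot r$, and therefore a Borel function $\Phi \colon 2^\nat \to 2^\nat$ coded in $V[g]$ --- the continuous reading of $\dot r$ on a comeager set, extended arbitrarily --- such that $h(c') = \Phi(c')$ for every real $c'$ that is Cohen-generic over $V[g]$.

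The contradiction then follows from the descriptive set theory of $\mathbb{E}_0$. Since $h$ reduces $\mathbb{E}_0$ to equality and agrees with $\Phi$ on all $V[g]$-Cohen-generic reals, we get $\Phi(c') = \Phi(c'')$ iff $c' \VitEq c''$ for every pair of mutually $V[g]$-Cohen-generic reals. But $\Phi$ is a Baire-measurable, $\mathbb{E}_0$-invariant map to a standard Borel space, hence constant on a comeager Borel set $C$ coded in $V[g]$; and $C$ contains a pair of mutually $V[g]$-Cohen-generic reals $c', c''$ (a first Cohen real lands in $C$, and a second, Cohen over $V[g][c']$, still lands in $C$). Such $c', c''$ satisfy $c' \not\VitEq c''$ yet $\Phi(c') = \Phi(c'')$, a contradiction. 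So no such $h$ exists and $W[G] \models \card{2^\nat/\VitEq} > 2^{\aleph_0}$.

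I expect the second paragraph to be the main obstacle. Plain balancedness governs only the compatibility of conditions drawn from mutually generic extensions of the ground model; it says nothing a priori about the value of a name $\dot h$ read along a side Cohen forcing, and it does not guarantee that $\overline p$ remains balanced once $c$ is added. Extracting a single, uniformly-defined Borel function $\Phi$ capturing $h$ on all Cohen-generic reals at once is exactly where the compact-Hausdorff structure of $B$ is indispensable: the compactness and the closedness of downward cones and of the $\le$-relation between virtual conditions across the two extensions are what let the pointwise decisions ``$\overline q$ decides $\dot h(c')$'' be glued into a single ground-model-coded assignment. Once that step is in hand, the remainder is bookkeeping together with standard facts about the generic ergodicity of $\mathbb{E}_0$.
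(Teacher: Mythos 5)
The paper does not prove this theorem (it is quoted from Larson--Zapletal), so I assess your argument on its own terms; it contains a genuine gap, concentrated exactly at the step you flagged, and your sketch of how compactness would fill it does not address the actual difficulty. The fatal assertion is that $h(c') = \Phi(c')$ for every real $c'$ Cohen-generic over $V[g]$. What your construction actually yields is that the balanced virtual condition $\overline q_{c'}$ (your name for $\overline q$ evaluated at $c'$) forces, in the sense of the decision property of balanced conditions over $V[g][c']$, that $\dot h(\check c') = \Phi(c')$. This says nothing about the value of the true $h$ computed from the actual generic filter $G$: nothing guarantees that $G$ meets a realization of $\overline q_{c'}$, and it certainly cannot meet realizations of all of them, since for distinct generics these conditions are in general pairwise incompatible (they all lie below $\overline p$, but balance yields compatibility only for conditions coming from \emph{mutually generic} extensions). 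So the pointwise agreement of $\Phi$ with $h$, and hence the $\VitEq$-invariance of $\Phi$ that you need for the generic-ergodicity step, is unjustified. The contradiction in arguments of this kind must be run entirely inside the forcing relation: for mutually generic $c', c''$ one uses balance of $\overline p$ to amalgamate realizations of $\overline q_{c'}$ and $\overline q_{c''}$ into a single condition, which then forces $\dot h(c') = \dot h(c'')$ while $c'$ and $c''$ are not $\VitEq$-related, against what $p$ forces about $\dot h$ --- one never compares with the actual $G$.

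Even after that repair, the argument still requires that the decided value map $c \mapsto u_c$ (the real forced below $\overline q_c$ to equal $\dot h(\check c)$) be $\VitEq$-invariant on generics, and this is precisely what is missing. The true $h$ is invariant, but $u_c$ is not known to equal $h(c)$; for a finite flip $\gamma$, the conditions $\overline q_c$ and $\overline q_{\gamma c}$ are two balanced virtual conditions living in the \emph{same} extension $V[g][c] = V[g][\gamma c]$, and balance gives no compatibility between them, so they may decide different values for $\dot h$ at the common class of $c$ and $\gamma c$. Arranging a coherent, Vitali-invariant (or otherwise value-agreeing) system of balanced virtual conditions is exactly where the compact Hausdorff structure of the space $B$ --- its compactness, the closedness of the cones, and the closedness of $\le$ across extensions --- must actually be deployed; your second paragraph asserts that compactness makes the ``gluing'' possible, but the part you glue (one name $\dot r$, hence one Borel $\Phi$, obtained by mixing over a name for $\overline q$) needs no compactness at all, while the invariance under finite flips is left as an assertion. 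That this cannot be elided is shown by the fact that the conclusion fails for merely balanced forcings: the $\VitEq$-transversal poset is balanced, yet its extension of $W$ contains an injection of $2^\nat/\VitEq$ into $2^\nat$. So any correct proof must use compact balance essentially at this juncture, and your proposal does not yet do so.
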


Note that $\card{2^\nat / \VitEq} > 2^{\aleph_0}$ implies that there is no $\VitEq$-transversal, because if there were then it would determine an injection from $2^\nat$ into $2^\nat / \VitEq$.

\subsection{Colouring Graphs}

We now fix some notation and terminology for the graph-theoretic aspects of our work.

\begin{dfn}
Let $\Gamma$ be a graph on a set $X$.
A \emph{colouring} of $\Gamma$ is a function $c : X \rightarrow \kappa$ where $\kappa$ is an initial ordinal (so a wellordered cardinal) and $c$ has the property that if $x \mathrel \Gamma y$, then $c(x) \ne c(y)$.
\end{dfn}

The cardinal $\kappa$ can be thought of as the set of colours, in which case a colouring assigns colours in such a way that adjacent vertices receive distinct colours.

One is often concerned with colourings which use the least possible number of colours.

\begin{dfn}
Let $\Gamma$ be a graph\footnote{An irreflexive, symmetric relation.} on a set $X$.
The \emph{chromatic number} $\chi(\Gamma)$ of $\Gamma$ is the least ordinal $\kappa$ such that there is a colouring $c : X \rightarrow \kappa$ of $\Gamma$.
\end{dfn}

Following~\cite{larson-zapletal-GST}, for purposes of adding optimal colourings in balanced extensions of the symmetric Solovay model to graphs with countable chromatic number, we need a strengthening of the notion of countable
chromatic number.

\begin{dfn}[{\cite{erdos-hajnal-chnum-gr-ss}}]
A graph $\Gamma$ on a set $X$ has \emph{countable colouring number} iff there is a wellorder $\prec$ on $X$ such that for each $y \in X$ there are only finitely-many $x \prec y$ such that $x \mathrel{\Gamma} y$.
\end{dfn}

In $V$ graphs with countable colouring number have countable chromatic number, since one can wellorder the elements of $X$ and then construct a colouring with at most $\aleph_0$ colours by transfinite recursion, choosing for each
$y \in X$ a colour not included in the finite set $\{ c(x) : x \prec y \mathrel{\&} x \mathrel{\Gamma} y \}$.

\section{Parity Functions}

At last we arrive at the main theorem of this work.

\begin{thm} \label{main-thm}
Assuming the existence of an inaccessible cardinal, there is a transitive model $M$ of $\ZF + \DC$
which contains a parity function for $2^\nat$ but neither a nonprincipal ultrafilter on $\nat$ nor an $\VitEq$-transversal.
\end{thm}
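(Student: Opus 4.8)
The plan is to obtain $M$ as a forcing extension $W[G]$ of the symmetric Solovay model $W$ (with $\lambda$ the inaccessible already fixed above) by a Suslin forcing designed to add a $2$-colouring of the Hamming graph, and then to invoke the two preservation theorems for ultrafilters and for the smooth divide. Concretely, let $\Gamma$ be the Hamming graph on $2^\nat$ and let $P = \colouringPoset{2}{\Gamma}$ have as conditions the countable partial functions $p$ from $2^\nat$ to $2$ that are \emph{coherent} --- meaning that whenever $u, v \in \dom p$ lie in the same connected component of $\Gamma$ we have $p(u) = p(v)$ exactly when the $\Gamma$-distance from $u$ to $v$ is even --- ordered by reverse inclusion. (For the general theorem promised in the introduction one replaces this requirement by the demand that $p$ extend to a total $2$-colouring, and uses Lusin--Novikov, Theorem~\ref{lusin-novikov}, to decompose an arbitrary Borel bipartite $\Gamma$ of countable colouring number into countably many Borel partial functions so as to make sense of, and control, its component structure.) Coding countable partial functions as elements of a Polish space in the usual way makes $P$ a Suslin forcing, and since the union of a decreasing $\omega$-chain of coherent countable partial functions is again one, $P$ is $\sigma$-closed; hence, by the cited proposition on $\sigma$-closed forcing, it preserves $\DC$, adds no reals, and leaves $2^\nat$ unchanged.

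Granting this, the first task is to see that $c_G = \bigcup G$ is a parity function for $2^\nat$. Coherence makes $c_G$ a partial $2$-colouring of $\Gamma$, and totality follows by genericity: for each $x \in 2^\nat$ (the reals of $W[G]$ being those of $W$), the conditions with $x$ in their domain are dense, since a coherent $p$ can always be extended to include $x$ --- if the component of $x$ already meets $\dom p$ then coherence forces a unique value at $x$, and otherwise either value gives a coherent extension. This is the point at which the countable colouring number of $\Gamma$ enters for the Hamming graph: its components are precisely the countable $\mathbb{E}_0$-classes, which is exactly what makes these extensions (and the $\sigma$-closure) available. So $W[G] \models \ZF + \DC$ and contains a parity function, and it remains to check that $W[G]$ has no nonprincipal ultrafilter on $\nat$ and no $\VitEq$-transversal.

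For this the substantive work is to show that $P$ is both Bernstein-balanced and compactly balanced; Theorem~\ref{bernstein-no-ultra} then gives no nonprincipal ultrafilter on $\nat$ in $W[G]$, and Theorem~\ref{compact-balance-smooth-divide} gives $W[G] \models \card{2^\nat / \VitEq} > 2^{\aleph_0}$, hence (as noted after that theorem) no $\VitEq$-transversal. The key step is a classification of the balanced virtual conditions: I expect these to be exactly the virtual total $2$-colourings of $\Gamma$ --- a total $2$-colouring $c$ of $\Gamma$ in $V$ (one exists because $\Gamma$ is bipartite) becomes a countable partial function, and hence a condition refining any $p \subseteq c$, after the collapse $\Coll{\aleph_0}{2^\nat}$, and the resulting pair is checked to determine a balanced virtual condition. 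Balance then follows because any two conditions consistent with a fixed total $c$ have union contained in $c$, hence are compatible; compact balance follows by observing that the set of all $2$-colourings of $\Gamma$ is a definable closed --- thus compact Hausdorff --- subspace of $2^{2^\nat}$, on which the cones below conditions of $P$ are closed and the extension relation between colourings of nested generic extensions is closed, these being Borel-absolute facts about the fixed Borel $\Gamma$.

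The hard part, and the reason for the remaining hypothesis in the general statement, is Bernstein balance --- that is, given a balanced virtual condition $\overline p$, a condition $p \le \overline p$ in some $V[G]$, a poset $Q \in V$ with $\powset Q \cap V$ countable, and a perfect family $\mathcal H$ of mutually $Q$-generic filters, producing $H \in \mathcal H$ for which every condition of $P$ in $V[H]$ below $\overline p$ is compatible with $p$. The obstruction one must rule out is that across the perfect family there appear new vertices simultaneously adjacent to many old ones, forcing an incompatibility for every choice of $H$; the hypothesis that $\Gamma$ embeds no $K_{n, \omega_1}$ is precisely what bounds the common neighbourhoods of finite sets of vertices and lets a counting argument over $\mathcal H$ go through. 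For the Hamming graph this obstruction is vacuous, since every vertex has only countably many neighbours so no $K_{1, \omega_1}$ embeds, and the balanced-virtual-condition analysis becomes especially simple; the plan is thus to carry out the general argument and then note that the Hamming graph is bipartite, has countable colouring number, and embeds no $K_{n,\omega_1}$, which yields Theorem~\ref{main-thm}.
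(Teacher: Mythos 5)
Your overall route is the paper's route: force over the symmetric Solovay model $W$ with a Suslin, $\sigma$-closed poset of countable partial $2$-colourings of the Hamming graph, classify the balanced virtual conditions as the total ground-model $2$-colourings, use Bernstein balance (via the non-embedding of some $K_{n,\omega_1}$, which for the Hamming graph holds with $n=1$ by local countability) together with Theorem~\ref{bernstein-no-ultra} to exclude nonprincipal ultrafilters, and use compact balance (the colourings forming a closed subspace of $2^{2^\nat}$) together with Theorem~\ref{compact-balance-smooth-divide} to exclude an $\VitEq$-transversal. The only structural difference is that you work with the coherent-partial-colouring poset directly, whereas the paper passes to a dense subset (conditions all of whose domain points have only finitely many neighbours outside the domain, made Borel via Lusin--Novikov) because for a general bipartite Borel graph ``extends to a total colouring'' is not obviously analytic; for the Hamming graph your poset is Borel and $\sigma$-closed outright, so this difference is harmless for the theorem as stated, though your Lusin--Novikov remark does not by itself carry the general case.

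There is, however, a genuine gap at the central point: your justification of balance, ``any two conditions consistent with a fixed total $c$ have union contained in $c$, hence are compatible,'' is false as stated. After the collapse, a condition below $\check c$ is a countable partial colouring that properly extends $c$ to new reals of its own generic extension, so the union of two such conditions is not contained in $c$; the entire content of balance is that the new parts, arising in \emph{mutually generic} extensions $V[G]$ and $V[H]$, cannot produce a parity conflict. The paper proves exactly this with two mutual-genericity claims: any name for a vertex outside $V$ is forced to have only finitely many ground-model neighbours (this also shows $\check c$ is still a condition after the collapse), and vertices generic over each other are never adjacent. For the Hamming graph one can argue even more directly that a new real of $V[G]$ and a new real of $V[H]$ cannot be $\VitEq$-related, since eventual equality would place both in $V[G]\cap V[H]=V$; some argument of this kind is indispensable, because the classification of balanced virtual conditions is what both your Bernstein-balance and compact-balance steps rest on. As written, your sketch omits this idea, so you should supply it (or the paper's two claims) to complete the proof.
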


The proof is accomplished by considering the balanced extension of $W$ by a forcing notion adding a $2$-colouring of the Hamming graph $H$ on $\nat$.
This can be accomplished by means of a simplicial complex forcing (see \cite[Example 6.2.11]{larson-zapletal-GST}), but we illustrate the use of a different Suslin forcing, modified from \cite[Definition 8.1.1]{larson-zapletal-GST},
which has the advantage of being balanced for all Borel bipartite graphs of countable colouring number rather than just those which are locally countable.

\begin{dfn} \label{2-colouring-poset-df}
For $X$ a Polish space and $\Gamma$ a graph on $X$, the \emph{$2$-colouring poset $\colouringPoset{2}{\Gamma}$} consists of countable partial $2$-colourings of $\Gamma$ which can be extended to total colourings, ordered by reverse
inclusion.
\end{dfn}

We shall use the following technical lemma.
Here and in the sequel, a ``large finite fragment of $\ZFC$'' is a collection of axioms of $\ZFC$ containing all axioms which have length less than $16^{256}$ symbols.
What is important is that the fragment contains all axioms of $\ZFC$ relevant to the argument at hand, and the above requirement will always be clearly sufficient.
For any fixed finite fragment of $\ZFC$, a transitive set model of this fragment can be proved to exist via reflection (see, for example, \cite[thm. 7.5]{kunen}).

\begin{lemma}[{\cite[claim 8.1.3]{larson-zapletal-GST}}] \label{ctm-finite-gr-nbhd}
Fix a Polish space $X$ and $\Gamma$ a Borel graph on $X$ with countable colouring number, as witnessed by a wellorder $\prec$.
Let $N$ be a transitive model of a large finite fragment of $\ZFC$ which contains $X$, $\Gamma$, and $\prec$, and $M$ be a countable elementary submodel of the structure $\langle N, \in, X, \Gamma, \prec \rangle$.
Then any vertex $y \in X \setminus M$ has only finitely-many neighbors in $M$.
\end{lemma}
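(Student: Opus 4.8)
The plan is to prove directly that every $\Gamma$-neighbour of $y$ lying in $M$ is $\prec$-below $y$; since $\prec$ witnesses countable colouring number, the set $\{x \in X : x \prec y \text{ and } x \mathrel{\Gamma} y\}$ is finite, so this immediately yields the conclusion. The key device is the function $F$, definable in the structure $\langle N, \in, X, \Gamma, \prec\rangle$, given by $F(x) = \{z \in X : z \prec x \text{ and } z \mathrel{\Gamma} x\}$: by the hypothesis on $\prec$ each value $F(x)$ is finite, and since $X$, $\Gamma$, and $\prec$ are predicates of the structure, $F(x)$ is definable from the single parameter $x$.

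First I would record a small consequence of elementarity: whenever $a \in M$ and $a$ is finite as computed in $N$, in fact $a \subseteq M$. Indeed, by elementarity $M$ contains some $n \in \omega$ and a bijection $g \colon n \to a$; then for each $i < n$ we have $i \in M$, hence $g(i) \in M$, and since $g$ is onto $a$ we get $a = \{g(i) : i < n\} \subseteq M$. Now take any $\Gamma$-neighbour $x$ of $y$ with $x \in M$. Since $y \notin M$ we have $x \ne y$, and $F(x) \in M$ by definability of $F$, while $F(x)$ is finite in $N$; hence $F(x) \subseteq M$, so $y \notin F(x)$, i.e.\ $y \not\prec x$. As $\prec$ is a linear order and $x \ne y$, this forces $x \prec y$. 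Thus every neighbour of $y$ in $M$ lies in the finite set $\{x \in X : x \prec y \text{ and } x \mathrel{\Gamma} y\}$, and so $y$ has only finitely-many neighbours in $M$.

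There is no deep obstacle here; the only point requiring care is the absoluteness bookkeeping. One must check that $N$, a model of merely a finite fragment of $\ZFC$, nevertheless computes the sets $F(x)$ and their finiteness correctly, that it sees $\prec$ as a linear order, and that ``$a \in M$ is finite in $N$'' genuinely entails $a \subseteq M$. All of these reduce to the standard absoluteness of bounded formulas and of the notion of finiteness between transitive $\in$-models, which the fragment fixed in the statement comfortably covers; one should simply be explicit about this when writing out the details.
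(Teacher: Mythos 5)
Your argument is correct and is essentially the standard proof of this fact (the paper itself only cites \cite[claim 8.1.3]{larson-zapletal-GST}, whose argument is exactly this one): show every neighbour of $y$ in $M$ is $\prec$-below $y$ by noting that the finite set $F(x)$ of $\prec$-predecessor-neighbours of $x\in M$ lies in $M$ and hence is contained in $M$, so it cannot contain $y$. The absoluteness bookkeeping you flag (finiteness, the separation instance giving $F(x)\in N$, and ``finite element of $M$ implies subset of $M$'') is routine and covered by the fixed fragment, so no gap remains.
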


To verify DC is preserved we need to pass to a dense subset of $\colouringPoset{2}{\Gamma}$, as in the development in \cite[sec. 8.1]{larson-zapletal-GST}.

\begin{lemma}
For every bipartite Borel graph $\Gamma$ over a Polish space $X$ which has countable colouring number, $\colouringPoset{2}{\Gamma}$ contains a dense subset $P$ which is Suslin and $\sigma$-closed.
\end{lemma}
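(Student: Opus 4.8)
The plan is to identify an explicit dense subset $P \subseteq \colouringPoset{2}{\Gamma}$ on which both Suslinity and $\sigma$-closure become transparent, following the template of \cite[sec. 8.1]{larson-zapletal-GST}. The natural candidate is to let $P$ consist of those countable partial $2$-colourings $p$ of $\Gamma$ whose domain is \emph{closed under $\prec$-predecessors that are $\Gamma$-neighbors}; more precisely, take $P$ to be the set of $p \in \colouringPoset{2}{\Gamma}$ such that for every $y \in \dom{p}$, all $\Gamma$-neighbors $x \prec y$ of $y$ also lie in $\dom{p}$. (One may also wish to close under the Lusin--Novikov functions witnessing local countability of the $\prec$-initial-segment-neighbor relation; I will include whatever closure conditions are needed to make the colourability constraint local.) First I would check that $P$ is dense: given an arbitrary $p \in \colouringPoset{2}{\Gamma}$, since $p$ is extendible to a total colouring, one can enlarge $\dom{p}$ by repeatedly adjoining, for each $y$ already in the domain, its finitely-many $\prec$-smaller $\Gamma$-neighbors (using Lemma~\ref{ctm-finite-gr-nbhd}, or rather the definition of countable colouring number, to see this process stays countable at each stage) and iterating $\omega$ times; the union is a countable partial colouring in $P$ below $p$, and it is still extendible to a total colouring because the restriction to its domain of any total colouring extending the original $p$ witnesses this.

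Next I would verify $\sigma$-closure. Given a descending chain $p_0 \ge p_1 \ge \cdots$ in $P$, the union $p = \bigcup_n p_n$ is a countable partial $2$-colouring whose domain is again closed under $\prec$-smaller $\Gamma$-neighbors. The one real content here is that $p$ is still extendible to a \emph{total} colouring of $\Gamma$. This is where the $\prec$-closure of the domain does the work: because every vertex $y \in \dom{p}$ has all its $\prec$-earlier neighbors inside $\dom{p}$, and $p$ is already a proper partial colouring, one can extend $p$ to all of $X$ by the same transfinite recursion along $\prec$ used to show countable colouring number implies countable chromatic number, namely at each $y \notin \dom{p}$ choosing a colour (from a suitably large palette, then observing bipartiteness lets us push down to $2$ colours) avoiding the finitely-many colours of $\prec$-earlier neighbors. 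Using bipartiteness of $\Gamma$ one argues the resulting total colouring can be taken $\{0,1\}$-valued: fix a $2$-colouring $c_0$ of $\Gamma$ in $V$, and define the extension of $p$ by XOR-ing $c_0$ against a choice of ``flips'' made consistently along $\prec$, so that the extension agrees with $p$ on $\dom{p}$. Thus $p \in P$ and $p \le p_n$ for all $n$.

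Finally, Suslinity: I would exhibit a Polish coding of countable partial functions $X \rightharpoonup 2$ (e.g. via $(X \times 2)^\nat$ or a standard Borel space of countable subsets with a labelling) and check that the conditions ``$p$ is a partial $2$-colouring of $\Gamma$,'' ``$\dom{p}$ is closed under $\prec$-smaller $\Gamma$-neighbors,'' and ``$p$ extends to a total colouring'' are analytic, as is the ordering $\le$ (reverse inclusion, which is Borel in the coding) and incompatibility $\perp$. The first two are Borel since $\Gamma$ and $\prec$ are Borel; the extendibility condition is the only one that is genuinely $\Sigma^1_1$ rather than Borel, being an existential quantifier over a total colouring, but by the $\sigma$-closure argument just given (run in $V$) a condition in $P$ is \emph{automatically} extendible to a total colouring, so on $P$ this clause is vacuous and extendibility need not even appear in the definition of $P$ — leaving $P$ Borel, hence certainly Suslin. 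The main obstacle, and the step deserving the most care, is precisely the interplay in the $\sigma$-closure argument between keeping the limit condition a \emph{two}-colouring (not merely countably-chromatic) and keeping its domain $\prec$-closed; once the right closure condition defining $P$ is pinned down so that this goes through uniformly, the rest is bookkeeping. I would model the exact formulation on \cite[Definition 8.1.1]{larson-zapletal-GST} and the surrounding discussion.
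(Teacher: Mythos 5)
There is a genuine gap, and it sits exactly at the point the lemma is really about: making the dense subset \emph{Suslin}. Your poset is defined from the wellorder $\prec$ witnessing countable colouring number (``$\dom{p}$ is closed under $\prec$-smaller $\Gamma$-neighbours''), and you then assert this clause is Borel ``since $\Gamma$ and $\prec$ are Borel.'' But $\prec$ is an arbitrary wellorder of the Polish space $X$ and can never be Borel, or even analytic, when $X$ is uncountable (by the Kunen--Martin theorem an analytic wellfounded relation has rank below $\omega_1$, while a wellordering of an uncountable Polish space has order type at least $\omega_1$). So your $P$ admits no analytic definition over a Polish space and is not Suslin; nor is your fallback ``close under the Lusin--Novikov functions for the $\prec$-initial-segment-neighbour relation'' available, since that relation again involves $\prec$. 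In the paper's proof the wellorder never enters the \emph{definition} of the dense set; it enters only the \emph{proofs} (through the elementary-submodel lemma~\ref{ctm-finite-gr-nbhd}). The paper's conditions are countable partial $2$-colourings $p$ such that (i) every $x \in \dom{p}$ has only finitely many $\Gamma$-neighbours outside $\dom{p}$, and (ii) $p(x) = p(y)$ implies $d_\Gamma(x,y)$ is even or infinite; clause (ii) is the definable surrogate for extendibility (given bipartiteness), and clause (i) is handled definably via Lusin--Novikov. Note also that keeping the raw extendibility clause would not obviously help, since ``there exists a total $2$-colouring extending $p$'' quantifies over $2^X$ and is not analytic on its face; it is precisely the parity reformulation (ii) that makes the poset definable.

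Your attempted repair --- dropping extendibility on the grounds that it is automatic for $\prec$-closed conditions --- is false. Suppose $\Gamma$ contains a path $a \mathrel{\Gamma} b \mathrel{\Gamma} c \mathrel{\Gamma} d$ and choose a witnessing wellorder beginning $a \prec d \prec b \prec c$ (any witness can be modified to start this way, since moving four vertices to the front adds at most finitely many earlier neighbours to each vertex). Then $p = \{(a,0),(d,0)\}$ is a proper partial $2$-colouring whose domain is closed under $\prec$-smaller neighbours, yet it has no extension to a total $2$-colouring because $a$ and $d$ lie at odd distance $3$. For the same reason your $\sigma$-closure sketch is misattributed: the $\prec$-closure of the domains does no work there, and ``transfinite recursion along $\prec$, then push down to $2$ colours by bipartiteness'' does not go through with only two colours. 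What actually saves the union $p = \bigcup_n p_n$ is that each $p_n$ is extendible, hence satisfies the parity condition (ii), and any two points of $\dom{p}$ lie in a common $\dom{p_n}$; then $p$ extends componentwise using a fixed total $2$-colouring of $\Gamma$, exactly as in the paper's first claim. So the invariant to build into the conditions is the parity condition, not closure under $\prec$; once that (together with clause (i), which the subsequent balance arguments also rely on) replaces your closure clause, you are essentially back at the paper's definition of $P$.
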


\begin{proof}
Analogously as for \cite[definition 8.1.1]{larson-zapletal-GST},
let $P$ be the set of enumerations of partial functions $p$ from $X$ to $2$ with the property that every $x \in \dom{p}$ is $\Gamma$-connected with at most finitely-many elements of $X \setminus \dom{p}$, and moreover if
$x, y \in \dom{p}$ and $p(x) = p(y)$, then $d_\Gamma(x,y) \in 2 \nat \cup \{ \infty \}$, where $d_\Gamma$ is the graph metric with edge weights $1$.
The latter condition is needed to ensure that $p$ can be extended to a total $2$-colouring of $\Gamma$.

\begin{claim}
$P \subseteq \colouringPoset{2}{\Gamma}$
\end{claim}

\begin{subproof}
Let $p \in P$ and $c : X \rightarrow 2$ be a total $2$-colouring of $\Gamma$.
Define $d : X \rightarrow 2$ by
\[ d(x) = \begin{cases} p(x) & \text{if $x \in \dom{p}$} \\
                        p(y) & \text{if there is $y \in \dom{p}$ with $d_\Gamma(x,y) \in 2 \nat$} \\
                    1 - p(y) & \text{if there is $y \in \dom{p}$ with $d_\Gamma(x,y) \in 2 \nat + 1$} \\
                    c(x)     & \text{otherwise.} \end{cases} \]
This function $d$ is well-defined because $p \in P$ and $\Gamma$ is bipartite, and it is clearly a total $2$-colouring of $\Gamma$ extending $p$.
\end{subproof}

\begin{claim}
$P$ is dense in $\colouringPoset{2}{\Gamma}$.
\end{claim}

\begin{subproof}
Let $p \in \colouringPoset{2}{\Gamma}$, and $c : X \rightarrow 2$ be a total $2$-colouring of $\Gamma$ which extends $p$.
Fix a wellorder $\prec$ witnessing that $\Gamma$ has countable colouring number.
Let $\overline M$ be a model of a large finite fragment of $\ZFC$ which contains $p$, $c$, and $\prec$; and let $M$ be a countable elementary substructure of $(M, \in, p, c, \prec)$.
By lemma~\ref{ctm-finite-gr-nbhd}, $\{ y \in X : \exists^\infty x \in M. \ x \mathrel{\Gamma} y \} \subseteq M$.
Thus $c \restriction M$ is a condition in $P$ extending $p$.
\end{subproof}

\begin{claim}
The set of enumerations of conditions in $P$ is Borel.
\end{claim}

\begin{subproof}
Similar to \cite[Claim 8.1.4]{larson-zapletal-GST}.
By lemma~\ref{ctm-finite-gr-nbhd}, for every countable set $A \subseteq X$ the set of $y \in X$ adjacent to infinitely-many elements of $A$ is countable.
Using the Lusin-Novikov theorem (theorem~\ref{lusin-novikov}), choose a family of Borel functions $f_k \mathrel{:} X^\nat \rightarrow X$ such that for each $a \in X^\nat$, thought of as an enumeration of a countable subset of $X$,
all elements of $X$ which are adjacent to infinitely-many elements of the range of $a$ are in the set $\{f_k(x) \mathrel{:} k \in \nat \}$ of values of the functions $f_k$ evaluated at $x$.

To see that the property of being a condition in $P$ is Borel, note that a function $r \mathrel{:} \nat \rightarrow X \times 2$ enumerates a condition in $P$ iff the range of $r$ is a $\Gamma$-colouring, there are no pairs
$x, y \in X$ with the property that $p(x) = p(y)$ and $d_\Gamma(x,y) \in 2 \nat + 1$, and the range $a$ of $\pi_1 \circ r$ contains all points $f_k(\pi_1 \circ r)$ for $k \in \nat$ which are adjacent to infinitely many elements of $a$,
where $\pi_1$ is the projection map of $X \times \nat$ to the first coordinate. This clearly yields a Borel definition of membership in $P$.
\end{subproof}

It follows that $P$ is Suslin, because two conditions in $P$ are compatible iff they agree on their domains and do not assign the same colour to any two vertices an odd distance apart, which is a Borel property.

\begin{claim}
$P$ is $\sigma$-closed.
\end{claim}

\begin{subproof}
Let $\langle p_i \mathrel{:} i \in \nat \rangle$ be a descending sequence of conditions in $P$.
Define $\chi = \bigcup_{i = 1}^\infty p_i$ and note that this is a function.
If the distance between vertices $x$ and $y$ in $\Gamma$ is odd then no condition $p_i$ colours both with the same colour, so $p$ cannot colour $x$ and $y$ with the same colour either.
We now need to extend $\chi$ to an element of $P$.
Let $N$ be a transitive model of a large fragment of $\ZFC$ such that the elements of $N$ include $X$, $\Gamma$, $\chi$, $\prec$, and $\langle p_i \mathrel{:} i \in \nat \rangle$.
Take $M$ to be a countable elementary substructure of $\langle N, \in, X, \Gamma, \chi, \prec, \langle p_i \mathrel{:} i \in \nat \rangle \rangle$.
Extend $\chi$ to a $2$-colouring $p$ of $\Gamma$ restricted to $M \cap X$; we verify that this is an element of $P$.
The partial $2$-colouring $p$ is countable in $V$ because $M$ is countable.
The fact that $p$ is a partial $2$-colouring of $M \cap X$ immediately implies that if $p(x) = p(y)$, then $x$ and $y$ are separated by an even distance in $\Gamma$.
Since $\prec \cap M$ witnesses that $\Gamma$ has countable colouring number in $M$, each vertex in $\dom{p}$ is adjacent to only finitely-many elements of $ (X \cap M) \setminus \dom{p}$.
From lemma~\ref{ctm-finite-gr-nbhd} we conclude that each vertex in $\dom{p}$ is adjacent to only finitely-many elements of $X \setminus \dom{p}$, and this completes the verification that $p \in P$.
Clearly $p_i \subseteq p$, and thus $p \le p_i$, for each $i \in \nat$.
\end{subproof}
\end{proof}

\begin{lemma} \label{bvc-classification-2-colouring-poset}
The balanced virtual conditions of $P$ are classified by total $2$-colourings of $P$ in the ground model.
\end{lemma}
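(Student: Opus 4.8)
The plan follows the template for classifying balanced virtual conditions in geometric set theory. First I would set up the correspondence: to a total $2$-colouring $c \colon X \to 2$ of $\Gamma$ lying in $V$, associate the virtual condition $\overline c$ determined by a pair $\langle Q, \dot p_c\rangle$, where $Q$ collapses $\card{X}$ to $\aleph_0$ (so $\card{Q} < \lambda$) and $\dot p_c$ names an enumeration of the restriction of $c$ to a generic countable elementary submodel $\dot M$ of a transitive model of a large fragment of $\ZFC$ containing $X$, $\Gamma$, and the wellorder $\prec$ witnessing countable colouring number. By the density argument in the preceding lemma this restriction is a condition of $P$, so $\dot p_c$ names a condition; and given two such submodels $M_0, M_1$, the restriction $c \restriction (M_0 \cup M_1)$ is again a condition of $P$ refining both $c \restriction M_0$ and $c \restriction M_1$ (the finitely-many-neighbours requirement is inherited since shrinking the complement of the domain only shrinks the relevant neighbour sets), which verifies the coherence of $\langle Q, \dot p_c\rangle$ across $Q \times Q$ and hence that $\overline c$ is a genuine virtual condition. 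Since $\overline c \le p$ for every $p \in P$ extending to the total colouring $c$, and since by Definition~\ref{2-colouring-poset-df} every condition of $P$ extends to some total colouring in $V$, once each $\overline c$ is shown balanced it follows both that $P$ is balanced and that these are all of its balanced virtual conditions.

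The main work, and the step I expect to be the chief obstacle, is to prove each $\overline c$ balanced. Let $V[G]$ and $V[H]$ be mutually generic over $V$ and let $p \in V[G]$, $q \in V[H]$ be conditions of $P$ with $p, q \le \overline c$ in a common extension $V[K]$ realizing $\overline c$; I must produce a common lower bound of $p$ and $q$ in $P$ as computed in $V[K]$. The natural candidate is (an enumeration of) $p \cup q$: since $p$ and $q$ both refine the realization of $\overline c$, they agree on the common part of their domains, and because $\dom{p} \subseteq X^{V[G]}$, $\dom{q} \subseteq X^{V[H]}$, and $X^{V[G]} \cap X^{V[H]} = X^V$ by mutual genericity, their domains overlap only inside $X^V$, where both equal $c$; so $p \cup q$ is a well-defined partial $2$-colouring. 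That it can be completed to a condition of $P$ comes down to two points. The finitely-many-neighbours requirement is handled by Lemma~\ref{ctm-finite-gr-nbhd}, exactly as in the proof that $P$ is $\sigma$-closed. The delicate point is that $p \cup q$ creates no monochromatic $\Gamma$-edge (nor a monochromatic pair joined by an odd $\Gamma$-path); since $p$ and $q$ are separately proper colourings and both extend $c$, the only danger is an edge, or odd path, running between $\dom{p} \setminus X^V$ and $\dom{q} \setminus X^V$, and here one argues that no such edge exists. For the Hamming graph this is immediate: a $\Gamma$-neighbour of a sequence is recursive in it, so a would-be cross-edge would place a real of $V[H] \setminus V$ into $V[G]$, hence into $V[G] \cap V[H] = V$, a contradiction; in the general case the structural hypotheses on $\Gamma$ --- that it is Borel, bipartite, of countable colouring number, and does not embed some $K_{n,\omega_1}$ --- are what is invoked to exclude such cross-edges, while odd cross-paths are harmless because any stretch through $X^V$ is coloured consistently by $c$, which both $p$ and $q$ respect. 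With cross-edges excluded, $p \cup q$ completes to a witness of compatibility, so $\overline c$ is balanced.

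For the converse, that every balanced virtual condition is virtually equivalent to some $\overline c$, I would pass through balance equivalence: by the uniqueness clause of \cite[Theorem~5.2.8]{larson-zapletal-GST} each balance-equivalence class contains a unique virtual condition, so it suffices to extract from an arbitrary balanced virtual condition $\overline p$ a total $2$-colouring $c$ of $\Gamma$ in $V$ with $\overline p \balanceEq \overline c$. Realize $\overline p$ in some $V[G]$ with realization $p$; for each $x \in X^V$, Lemma~\ref{ctm-finite-gr-nbhd} provides extensions of $p$ inside $P$ whose domain contains $x$, and balance of $\overline p$ forces the colour they assign to $x$ to be independent of the extension and of the ambient model --- otherwise two mutually generic realizations would yield incompatible conditions below $\overline p$. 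Reading off these colours gives a map $c$ on $X^V$ that lies in $V$ (being defined by an absolute condition with parameter $\overline p$) and is a proper colouring of $\Gamma \restriction X^V$, since a monochromatic edge would already surface in a condition of $P$ below $\overline p$; extend $c$ to a total colouring of $\Gamma$ in $V$ (possible since $\Gamma$ has countable colouring number), and a routine check against the definition of $\balanceEq$ gives $\overline p \balanceEq \overline c$. Apart from the cross-edge exclusion in the balance verification, all of this is bookkeeping with the definitions of virtual condition and balance equivalence.
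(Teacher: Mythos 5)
Your outline does track the paper's: classify balanced virtual conditions by total ground-model $2$-colourings, prove balance by amalgamating conditions from mutually generic extensions over the common colouring, and recover a colouring from an arbitrary balanced pair by reading off the forced colours and passing through balance equivalence (that converse portion of your argument is essentially the paper's). But the step you yourself flag as the chief obstacle --- excluding a $\Gamma$-edge between $\dom{p}\setminus V$ and $\dom{q}\setminus V$ for mutually generic $p,q$ --- is exactly the step you do not prove at the level of generality the lemma demands. Your argument covers only the Hamming graph (neighbours recursive in the vertex), which is really a local-countability argument, and for the general case you appeal to ``the structural hypotheses,'' even citing the non-embedding of $K_{n,\omega_1}$, which is \emph{not} a hypothesis of this lemma at all (it enters only later, in the Bernstein-balance argument). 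The paper's proof spends its main technical effort precisely here: one claim shows that any name for a new point of $X$ is forced to have only finitely many ground-model neighbours (needed even to see that the ground-model colouring names a condition after the collapse), and a second claim shows that names over a product $R\times S$ for new points on the two sides are forced non-adjacent; both are proved by building transitive models $N_1\in N_2$ of a large fragment of $\ZFC$, countable elementary submodels $M\in N$, countably many mutually $M$-generic filters producing infinitely many distinct interpretations inside $N$, and one further filter producing a point outside $N$ adjacent to all of them, contradicting Lemma~\ref{ctm-finite-gr-nbhd}. Without an argument of this kind, relying only on countable colouring number and Borelness, your compatibility verification does not go through, so the proposal has a genuine gap at its core. (Your parenthetical that odd cross-paths are harmless ``because any stretch through $X^V$ is coloured consistently'' is also not an argument: an odd path joining the two new sides need not meet $X^V$.)

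A secondary problem is your choice of virtual condition. Restricting $c$ to a generic countable elementary submodel does not yield a pair satisfying the coherence requirement $Q\times Q\Vdash \dot p/\dot G_0\equiv \dot p/\dot G_1$: two mutually generic submodels give two different partial colourings, which are compatible in the reverse-inclusion preorder but not equivalent, and your verification only establishes compatibility. The paper avoids this entirely by taking the pair $\langle\Coll{\aleph_0}{X},\check c\rangle$, i.e.\ the whole ground-model colouring, whose interpretation is the same on both coordinates; its membership in $P$ after the collapse is then exactly what the finitely-many-old-neighbours claim is for. So the correspondence you set up would need to be repaired along the paper's lines before the rest of your bookkeeping applies.
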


\begin{proof}
The proof proceeds like that of \cite[thm. 8.1.2]{larson-zapletal-GST}.
Let $R$, $S$ be posets and $\dot x$, $\dot y$ be $R$- and $S$-names for elements of $X \setminus V$, respectively.

\begin{claim} \label{R-forces-x-gamma-fin}
The poset $R$ forces that $\dot x$ is adjacent in $\Gamma$ to at most finitely-many elements in $V$.
\end{claim}

\begin{subproof}
Suppose for contradiction that there is a condition $r \in R$ which forces that $\dot x$ is adjacent in $\Gamma$ to infinitely-many elements of $V$.
Let $N$ be a transitive model of a large finite fragment of $\ZFC$ whose elements include $X$, $\Gamma$, $R$, and $\dot x$.
Take $M$ a countable elementary substructure of $\langle N, \in, X, \Gamma, R, \dot x \rangle$, and $g$ an $M$-generic filter over $R \cap M$.
Denoting the interpretation of $\dot x$ by $x$ (which is an element of $X$), we have that ``$x$ is adjacent in $\Gamma$ to infinitely-many elements of $V$'' holds in $M[g]$ by the forcing theorem.
By Mostowski absoluteness \cite[thm. 25.4]{jech}
$x$ is adjacent in $\Gamma$ to infinitely-many elements of $M$, which contradicts lemma \ref{ctm-finite-gr-nbhd}.
\end{subproof}

\begin{claim} \label{mgen-not-adj}
The poset $R \times S$ forces that $\dot x$ and $\dot y$ are not adjacent in $\Gamma$.
\end{claim}

\begin{subproof}
Suppose for contradiction that there is a condition $\langle r, s \rangle \in R \times S$ which forces that $\dot x$ and $\dot y$ are adjacent in $\Gamma$.
Let $N_1$ be a transitive model of a large finite fragment of $\ZFC$ whose elements contain $X$, $\Gamma$, $R$, $S$, $r$, $s$ $\dot x$, and $\dot y$, and take $N_2$ a transitive model of a large finite fragment of $\ZFC$ whose elements include
$N_1$ and such that for every ordinal $\alpha \in N_1$, $2^{\lvert \alpha \rvert} \in N_2$.
Define $M$ to be the transitive collapse of a countable elementary substructure of $\langle N_1, \in, X, \Gamma, R, S, r, s, \dot x, \dot y \rangle$, and $N$ to be the transitive collapse of a countable elementary substructure of
$\langle N_2, \in, X, \Gamma, R, S, r, s, \dot x, \dot y \rangle$ with $M \in N$.
Let $\langle g_i \mathrel{:} i \in \nat \rangle$ be filters in $N$ which are mutually $M$-generic over $R \cap M$.
By mutual-genericity in $M$, the interpretations $x_i$ of $\dot x$ with respect to each $g_i$ are distinct elements of $X \cap N$.
Now take a filter $h$ containing $s$ which is $N$-generic over $S \cap M$, and let $y = \dot y / h$.
Note that $y \in X \setminus N$.
The forcing theorem yields that each $x_i$ is adjacent to $y$ in $M[g_i, h]$, and by Mostowski absoluteness this holds in $V$.
But that contradicts lemma \ref{ctm-finite-gr-nbhd}.
\end{subproof}

\begin{claim}
For every total colouring $c \mathrel{:} X \rightarrow 2$ of $\Gamma$ the pair $\langle \Coll{\aleph_0}{X}, \check c \rangle$ is balanced.
\end{claim}

\begin{subproof}
Fix a total $2$-colouring $c$ of $\Gamma$ and observe that claim~\ref{R-forces-x-gamma-fin} entails that $\Coll{\aleph_0}{X} \Vdash \check c \in P$.
Let $\dot \chi$, $\dot \rho$ be $R$- and $S$-names, respectively, for elements of $P$ extending $\check c$.
Work now in a generic extension of $V$ by $R \times S$, and let $\chi$ and $\rho$ be the interpretations of $\dot \chi$ and $\dot \rho$ by the projections of the generic filter to its first and second factors, respectively.
By the product forcing theorem the intersection of the domains of $\chi$ and $\rho$ is a subset of $V$, and moreover $\chi \restriction (X \cap V) = \rho \restriction (X \cap V) = \check c$.
Therefore $\chi \cup \rho$ is a function.
Claim \ref{mgen-not-adj} immediately implies that it is also a $2$-colouring, and consequently $R \times S$ forces that $\dot \chi$ and $\dot \rho$ are compatible, as required in the definition of a balanced pair.
\end{subproof}

\begin{claim}
Every balanced pair for $P$ is balance-equivalent to a pair of the form $\langle \Coll{\aleph_0}{X}, \check c \rangle$ where $c \mathrel{:} X \rightarrow 2$ is a $\Gamma$-colouring.
\end{claim}

\begin{subproof}
Let $\langle Q, \dot q \rangle$ be a balanced pair.
Because every balanced pair is balance-equivalent to a virtual condition \cite[thm. 5.2.8]{larson-zapletal-GST},
we may assume that $\langle Q, \dot q \rangle$ is also a virtual condition.
Using the fact that a strengthening of a balanced pair is also balanced \cite[prop. 5.2.6]{larson-zapletal-GST},
we may assume without loss of generality that $Q \Vdash X \cap V \subseteq \dom{\dot q}$.
Because $\langle Q, \dot q \rangle$ is balanced, for $x \in X$ there is $c_x < 2$ such that $Q \Vdash \dot q(x) = c_x$.
Let $c$ be the mapping $x \mapsto c_x$, which exists and is an element of $V$ by the axiom of choice applied in $V$.
Since $Q \Vdash \dot q \in P$, $c$ is a $2$-colouring of $\Gamma$ in $V$.
Clearly $Q \times \Coll{\aleph_0}{X} \Vdash \dot q \le \check c$, and so by \cite[prop. 5.2.6]{larson-zapletal-GST} and the preceding claim we have that $\langle Q, \dot q \rangle$ is balance-equivalent
to $\langle \Coll{\aleph_0}{X}, \check c \rangle$.
\end{subproof}

\begin{claim}
If $c \ne d$ are distinct $\Gamma$-colourings then $\langle \Coll{\aleph_0}{X}, \check c \rangle$ is not balance-equivalent to $\langle \Coll{\aleph_0}{X}, \check d \rangle$.
\end{claim}

\begin{subproof}
This follows directly from the definition of the order of $P$.
\end{subproof}
\end{proof}

\begin{lemma}
\label{balance-of-2-colouring-poset}
For $\Gamma$ a bipartite Borel graph over a Polish space $X$ which has countable colouring number, there is a balanced Suslin forcing $P$ which adds a $2$-colouring of $\Gamma$ to the symmetric
Solovay model $W$.
\end{lemma}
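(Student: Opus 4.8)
The plan is to let $P$ be the dense subset of $\colouringPoset{2}{\Gamma}$ produced in the preceding lemma, so that $P$ is already known to be Suslin and $\sigma$-closed; it then remains only to check that $P$ is balanced and that a $P$-generic filter over $W$ furnishes a total $2$-colouring of $\Gamma$.

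For balance, fix an arbitrary $p \in P$. Since $p \in P \subseteq \colouringPoset{2}{\Gamma}$ — and indeed since the proof of that inclusion exhibits, for any element of $P$, a total $2$-colouring extending it — we may pick a total $2$-colouring $c \mathrel{:} X \to 2$ of $\Gamma$ with $c \supseteq p$. By the claims in the proof of Lemma~\ref{bvc-classification-2-colouring-poset}, $\langle \Coll{\aleph_0}{X}, \check c\rangle$ is a balanced pair; since $c \in V$ this pair is moreover a virtual condition, hence a balanced virtual condition, and because $\Coll{\aleph_0}{X} \Vdash \check c \le \check p$ it lies below $p$. Thus every condition of $P$ has a balanced virtual condition beneath it, which is exactly the assertion that $P$ is balanced.

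For the colouring, let $G$ be $P$-generic over $W$ and put $c_G = \bigcup\{ q : q \in G\}$, where conditions are regarded as partial functions (discarding their enumerations). Any two elements of $G$ are compatible, so $c_G$ is a partial $2$-colouring of $\Gamma$ lying in $W[G]$. To see that $\dom{c_G} = X$, fix $x \in X$ — noting that $X$ is computed the same way in $V$, in $W$, and in $W[G]$ — and verify that $D_x = \{ q \in P : x \in \dom{q}\}$ is dense below any $r \in P$: extend $r$ to a total $2$-colouring $d$ of $\Gamma$, take a countable elementary submodel $M$ of a large enough structure containing $d$, the witnessing wellorder $\prec$, $r$, and $x$, and invoke Lemma~\ref{ctm-finite-gr-nbhd} to see that $d \restriction (M \cap X)$ is a condition of $P$ below $r$ with $x \in \dom{d \restriction (M \cap X)}$. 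Genericity then puts $x$ into $\dom{c_G}$, so $c_G$ is a total $2$-colouring of $\Gamma$ in $W[G]$. Finally, since $P$ is $\sigma$-closed and $W \models \DC$, the extension $W[G]$ is again a model of $\ZF + \DC$, as desired.

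I do not expect any single step to be a genuine obstacle here, since the lemma is essentially an assembly of the preceding results; the one point that needs a moment of care is that balancedness requires a balanced virtual condition below \emph{every} condition of $P$, not merely the classification of balanced virtual conditions, and this is precisely where bipartiteness of $\Gamma$ is used, as it is what guarantees that an arbitrary element of $P$ extends to a total $2$-colouring. The density argument for totality is just a reprise of the argument already used to show $P$ is dense in $\colouringPoset{2}{\Gamma}$.
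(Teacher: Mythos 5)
Your proposal is correct and follows essentially the same route as the paper, whose proof is just a two-sentence citation: balance is read off from the classification of balanced virtual conditions (a total colouring extending any given condition gives a balanced pair $\langle \Coll{\aleph_0}{X}, \check c \rangle$ below it), and density of $P$ in $\colouringPoset{2}{\Gamma}$ yields the generic $2$-colouring. The details you supply --- the density of the sets $D_x$ via the elementary-submodel argument and the preservation of $\DC$ by $\sigma$-closure --- are exactly what the paper leaves implicit.
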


\begin{proof}
It is immediate from the preceding lemma \ref{bvc-classification-2-colouring-poset} that $P$ is balanced.
Since $P$ is dense in $\colouringPoset{2}{\Gamma}$, forcing with $P$ adds a $2$-colouring of $\Gamma$ to $W$.
\end{proof}

We use Bernstein-balance together with another technical assumption about $\Gamma$ to establish that $W^P$ contains no nonprincipal ultrafilter on $\nat$.

\begin{lemma}
Assume that $\Gamma$ is a Borel graph on a Polish space $X$ with countable colouring number, and that there is $n \in \nat$ such that there is no embedding\footnote{That is, injective graph homomorphism.} from $K_{n, \omega_1}$ into
$\Gamma$.
The poset $P$ which adds a $2$-colouring of $\Gamma$ is Bernstein-balanced and every balanced virtual condition is also Bernstein-balanced.
\end{lemma}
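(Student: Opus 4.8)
The plan is to use the classification of balanced virtual conditions already obtained to reduce the statement to the single claim that every balanced virtual condition of $P$ is Bernstein-balanced, and then to prove that claim by contradicting a failure of Bernstein balance, using the hypothesis on $K_{n,\omega_1}$ only at the very end. By Lemma~\ref{bvc-classification-2-colouring-poset} the balanced virtual conditions of $P$ are exactly the pairs $\overline p = \langle \Coll{\aleph_0}{X}, \check c \rangle$ for $c$ a total $2$-colouring of $\Gamma$ in $V$, and since every $p \in P$ extends (by definition of $P$) to a total colouring $c$, such a $\overline p$ is a balanced virtual condition below $p$; so it suffices to show each $\overline p$ of this form is Bernstein-balanced, and this gives both halves of the lemma. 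Fixing such a $\overline p$, a generic extension $V[G]$, a condition $p \le \overline p$ in $V[G]$, a poset $Q \in V$ with $\powset Q \cap V$ countable, and a perfect collection $\mathcal H$ of filters mutually generic over $Q$, I would suppose toward a contradiction that for every $H \in \mathcal H$ there is $q_H \in P \cap V[H]$ with $q_H \le \overline p$ and $q_H \perp p$.

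\emph{Localising the incompatibility.} The first substantive step is to show that the incompatibility $q_H \perp p$ is always witnessed by a finite configuration lying entirely off $V$. Since $p, q_H \le \overline p$, each of $p$ and $q_H$ agrees with $c$ where its domain meets $V$ and extends to a total colouring agreeing with $c$ on $X \cap V$. Using that $c$ is a colouring and that conditions of $P$ extend to total colourings, one checks that the obstruction to $p \cup q_H$ being a condition is a pair $(x_H, y_H)$ with $x_H \in \dom p \setminus V$, $y_H \in \dom{q_H}\setminus V$, $d_\Gamma(x_H, y_H)$ finite, and the values $p(x_H)$, $q_H(y_H)$ incompatible with the parity of any total colouring of the $\Gamma$-component $C$ of $x_H$ that agrees with $c$ on $X \cap V$. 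The same parity bookkeeping forces $C \cap (X \cap V) = \emptyset$ (a vertex of $V$ in $C$ would pin down the parities of both $p(x_H)$ and $q_H(y_H)$ and destroy the witness), and mutual genericity gives $V[H] \cap V[H'] = V$ for distinct $H, H' \in \mathcal H$, so the points $y_H \in V[H]\setminus V$ are pairwise distinct.

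\emph{Pigeonhole, a walk, and the forbidden subgraph.} Since $\dom p \setminus V$ is countable and $\mathcal H$ is uncountable, I would thin $\mathcal H$ to an uncountable subcollection along which $x_H$ equals a fixed vertex $a$ (so $C$ is one fixed component) and $d_\Gamma(a, y_H)$ equals a fixed $k \ge 1$; for each surviving $H$ fix a length-$k$ path from $a$ to $y_H$, chosen canonically from $a$, $y_H$, $\Gamma$, and the colouring-number wellorder $\prec$, so that it lies in $V[G]$. Because $p \in P$ every vertex of $\dom p$ has only finitely many $\Gamma$-neighbours outside $\dom p$, and because $C$ avoids $V$ any path vertex lying in $\dom p$ lies in the \emph{countable} set $\dom p \setminus V$; Lemma~\ref{ctm-finite-gr-nbhd} is the tool controlling the finitely many exceptional neighbours. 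Walking these paths level by level and pigeonholing, one reaches a least level at which the vertices cannot be held constant, where a single fixed vertex acquires uncountably many distinct neighbours among the paths; analysing how the bounded-length tails recombine on the way to the pairwise-distinct $y_H$, a free-set (or $\Delta$-system) type argument then yields $n$ distinct vertices of $\Gamma$ with at least $\aleph_1$ common neighbours, i.e.\ an embedding of $K_{n,\omega_1}$ into $\Gamma$, contradicting the hypothesis. I expect this last extraction to be the main obstacle: turning the branching pattern of uncountably many short paths into an honest $K_{n,\omega_1}$ needs a real free-set argument rather than naive iterated pigeonhole. When $\Gamma$ is locally countable (equivalently, $K_{1,\omega_1}$-free) this step is immediate and bypasses the walk, since $C$ is then a countable connected graph and hence cannot contain uncountably many distinct vertices $y_H$. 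The resulting contradiction shows that $\overline p$ is Bernstein-balanced, which by the reduction proves the lemma.
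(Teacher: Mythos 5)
Your reduction to showing that each virtual condition $\langle \Coll{\aleph_0}{X}, \check c \rangle$ is Bernstein-balanced matches the paper, but the core of your argument has a genuine gap, and it is exactly the step you flag yourself: the claim that uncountably many bounded-length paths from a fixed vertex $a$ to pairwise distinct endpoints $y_H$ can be converted, by a free-set or $\Delta$-system argument, into $n$ vertices with $\aleph_1$ common neighbours, i.e.\ a copy of $K_{n,\omega_1}$. As a purely combinatorial statement this is false: an uncountably branching tree of depth $2$ (a root $a$ joined to $\omega_1$ vertices $m_\alpha$, each $m_\alpha$ joined to its own leaf $y_\alpha$) has countable colouring number, contains no $K_{2,2}$ at all, yet has uncountably many length-$2$ paths from $a$ with distinct endpoints. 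So no amount of thinning or path bookkeeping in $V[G]$ will produce the forbidden subgraph from the configuration you set up; the contradiction has to come from the genericity of the filters, which your final step does not use. (Your ``localisation'' step is also more elaborate than needed, and its parity/component analysis is not fully justified, but that is secondary.)

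The paper uses the $K_{n,\omega_1}$-freeness in the opposite direction. For $x \in \dom{p}$ it lets $\mathcal G_x$ be the set of filters $H$ for which some $x_H \in X \cap (V[H]\setminus V)$ is \emph{adjacent} to $x$ with $p_H(x_H) = p(x)$; since each $x_H$ has only finitely many neighbours outside $V[H]$ (lemma~\ref{ctm-finite-gr-nbhd}), a pigeonhole argument gives a single $x$ with $\mathcal G_x$ uncountable. Then it picks $n+1$ filters $a \subseteq \mathcal G_x$ and, for each $n$-element subset $b$, observes that the common neighbourhood $C_b$ of $\{x_H : H \in b\}$ is \emph{countable} precisely because $K_{n,\omega_1}$ does not embed; by Shoenfield absoluteness $C_b \subseteq V[H : H \in b]$, so the fixed vertex $x$ (which lies in every $C_b$) belongs to each such model, and mutual genericity over all $n$-subsets of $a$ forces $x \in V$. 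Then $p_H(x) = c(x) = p(x) = p_H(x_H)$ with $x \mathrel{\Gamma} x_H$ contradicts $p_H$ being a partial $2$-colouring. So the hypothesis is used to bound the size of common neighbourhoods and feed that into an absoluteness-plus-mutual-genericity argument, not to exhibit $K_{n,\omega_1}$ inside $\Gamma$; to repair your proof you would need to replace your final extraction with this mechanism.
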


\begin{proof}
The proof follows closely the argument from Example 12.2.11 in \cite{larson-zapletal-GST}.

By lemma~\ref{bvc-classification-2-colouring-poset} an arbitrary balanced virtual condition for $P$ is balance-equivalent to a condition of the form $\langle \Coll{\aleph_0}{X}, \check c \rangle$, where $c \mathrel{:} X \rightarrow 2$
is a $2$-colouring of $\Gamma$.
Work now in a generic extension $V[G]$.
Proceeding by contradiction, assume that $\langle \Coll{\aleph_0}{X}, \check c \rangle$ is not Bernstein-balanced.
Then there is a condition $p \le \check c$, an infinite poset $Q \in V$ with $\lvert \mathcal P(Q) \cap V \rvert^{V[G]} = \aleph_0$, and an uncountable (in $V[G]$) family $\mathcal H$ of mutually $V$-generic filters over $Q$
such that for each $H \in \mathcal H$ there is a condition $p_H$ in $V[H]$ with $p_H \le \check c$ and $p_H \perp p$.
For $x \in \dom{p}$ let $\mathcal G_x$ be the subfamily of $\mathcal H$ consisting of filters $H$ such that there is $x_H \in X \cap (V[H] \setminus V)$ with $x_H \mathrel{\Gamma} x$ and $p_H(x_H) = p(x)$.
The fact that $p_H \perp p$ for each $H \in \mathcal H$ guarantees that $\mathcal G_x$ is nonempty for each $x \in \dom{p} \setminus V$, and that $\bigcup_{x \in \dom{p}} \mathcal G_x = \mathcal H$.
For each $H \in \mathcal H$ and $x_H \in X \cap V[H]$ we have by lemma~\ref{ctm-finite-gr-nbhd} that $x_H$ is adjacent to only finitely-many $x \in X \setminus V[H]$, and so by the pigeonhole principle applied with the fact that
$\mathcal H$ is uncountable, there is $x \in \dom{p}$ such that $\mathcal G_x$ is uncountable.
Fix such $x$ and some particular choice of $x_H$ as above for each $H \in \mathcal G_x$.
Fix $n$ such that $K_{n, \omega_1}$ does not embed in $\Gamma$.
Let $a \in [\mathcal G_x]^{n+1}$, and for each $b \in [a]^n$ define $C_b$ to be the set of those $y \in X$ which are adjacent to $x_H$ for every $H \in b$.
Since $K_{n, \omega_1}$ does not embed in $\Gamma$, $C_b$ is countable for each $b$.
Shoenfield absoluteness \cite{shoenfield-absoluteness} (see \cite{jech} for a modern statement and proof) 
entails that $C_b \subseteq V[H \mathrel{:} H \in b]$, and so because each $H \in b$ is an element of $\mathcal G_x$ it follows that $x \in V[H \mathrel{:} H \in b]$.
Since this holds for every $b \in [a]^n$, by mutual genericity of the filters in $a$ we find that $x \in V$.
But then $p(x) = \check c(x) = p_H(x)$, contradicting the fact that $p_H$ is a partial $2$-colouring of $\Gamma$ (because $x \mathrel{\Gamma} x_H$ and $p_H(x_H) = p(x)$).
\end{proof}

\begin{cor}
The model $W^P$ for $\Gamma$ as in the lemma does not contain a nonprincipal ultrafilter on $\nat$.
\end{cor}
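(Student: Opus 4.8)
The plan is to read the corollary off directly from the preceding lemma together with Theorem~\ref{bernstein-no-ultra}. Recall that $P$ is the Suslin, $\sigma$-closed, dense subforcing of $\colouringPoset{2}{\Gamma}$ constructed earlier, so that forcing with $P$ over $W$ adds a $2$-colouring of $\Gamma$ and, by the proposition that $\sigma$-closed forcing preserves $\DC$, yields a model $W^P$ of $\ZF + \DC$. The preceding lemma tells us that $P$ is Bernstein-balanced, with every balanced virtual condition Bernstein-balanced.

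First I would check that this last conclusion holds \emph{cofinally} and not merely over $V$. The proof of the preceding lemma is a $\ZFC$ argument that invokes only the hypotheses that $\Gamma$ is a Borel graph of countable colouring number into which some $K_{n,\omega_1}$ fails to embed, and both hypotheses pass to generic extensions: the relevant content of ``no $K_{n,\omega_1}$ embeds'' is, just as it is used inside that proof, that the common $\Gamma$-neighbourhood of any $n$ vertices is a Borel set with no perfect subset, a property preserved upward by Shoenfield absoluteness, and the countable colouring number of a Borel graph is inherited by extensions for the same reason. Hence $P$ is Bernstein-balanced in cofinally many generic extensions.

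Given this, Theorem~\ref{bernstein-no-ultra} applies verbatim: $W^P$ is a cofinally Bernstein-balanced extension of the symmetric Solovay model $W$, so it contains no nonprincipal ultrafilter on $\nat$ (indeed no diffuse finitely-additive probability measure on $\nat$). The only step that is not purely formal is the cofinality bookkeeping of the second paragraph, which reduces entirely to the routine absoluteness of the hypotheses on $\Gamma$; no substantive obstacle arises, and the corollary follows at once.
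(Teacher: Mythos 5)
Your proposal is correct and takes essentially the same route as the paper, whose entire proof of this corollary is ``Immediate from Theorem~\ref{bernstein-no-ultra}.'' Your extra paragraph on verifying Bernstein-balance cofinally is bookkeeping the paper leaves implicit (and your absoluteness sketch for the hypotheses on $\Gamma$ is the right kind of justification), so there is no substantive difference in approach.
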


\begin{proof}
Immediate from theorem~\ref{bernstein-no-ultra}.
\end{proof}

It remains to show that $W^P$ does not contain an $\VitEq$-transversal, and for this we use the compact balance of $P$.
The argument here will only assume that $\Gamma$ has countable colouring number; the technical assumption that some $K_{n, \omega_1}$ not embed is not needed.

\begin{lemma} \label{2-colouring-poset-compactly-balanced}
For $\Gamma$ a bipartite Borel graph of countable colouring number, $\colouringPoset{2}{\Gamma}$ is compactly balanced.
\end{lemma}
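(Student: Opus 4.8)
The plan is to verify the clauses in the definition of compactly-balanced forcing directly, leaning on the classification of balanced virtual conditions already in hand. Since $P$ is dense in $\colouringPoset{2}{\Gamma}$ the two forcings are forcing-equivalent and share the same virtual conditions, and it is the Suslin forcing $P$ to which the definition literally applies, so I would work with $P$ throughout; by Lemmas~\ref{balance-of-2-colouring-poset} and~\ref{bvc-classification-2-colouring-poset} we already know $P$ is balanced with its balanced virtual conditions classified by the total $2$-colourings of $\Gamma$ in the ground model. What remains is to exhibit the definable compact Hausdorff space of balanced virtual conditions and to check the downward-cone, extension, and closedness requirements.

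For the space, I would take $B$ to be the set of all total $2$-colourings of $\Gamma$, topologized as a subspace of the Cantor cube $2^X$ with the product topology. For each edge $(x,y)\in\Gamma$ the set $\{c\in 2^X: c(x)\ne c(y)\}$ is clopen, so $B=\bigcap_{(x,y)\in\Gamma}\{c\in 2^X: c(x)\ne c(y)\}$ is closed in $2^X$; as $2^X$ is compact Hausdorff (Tychonoff, using choice in the metatheory) the space $B$ is compact Hausdorff, and it is manifestly definable from real codes for $X$ and $\Gamma$. Using Lemma~\ref{bvc-classification-2-colouring-poset} I identify the balanced virtual condition named by a colouring $c$ with the point $c\in B$; unwinding the order on virtual conditions, for a condition $q\in P$ one has $c\le q$ iff $c\supseteq q$, and for colourings $c,d$ one has $c\le d$ iff $c\supseteq d$.

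Then I would check the three structural clauses. Downward cones: for $q\in P$, the cone $\{c\in B: c\le q\}=B\cap\bigcap_{x\in\dom{q}}\{c\in 2^X: c(x)=q(x)\}$ is the intersection of the closed set $B$ with countably many clopen sets, hence closed, and it is nonempty because by definition every condition of $P$ extends to a total $2$-colouring. Extension across models: the formula defining $B$ is absolute enough that in any generic extension $V[K]$ it computes the space $B^{V[K]}$ of total $2$-colourings of $\Gamma$, so given a balanced virtual condition realized in some $V[H]$, i.e.\ a total colouring $c$ as computed in $V[H]$, and a larger extension $V[G]\supseteq V[H]$, the function $c$ is, in $V[G]$, a partial $2$-colouring of $\Gamma$ with domain $X\cap V[H]$ that still satisfies the distance-parity side condition used in the proof that $P\subseteq\colouringPoset{2}{\Gamma}$ --- this side condition being a conjunction over pairs from $X\cap V[H]$ of statements about the values of the graph metric $d_\Gamma$, which are projective ($\Sigma^1_2$) and hence absolute between $V[H]$ and $V[G]$ by Shoenfield absoluteness \cite{shoenfield-absoluteness} --- so the same ``copy the parity of the distance to $X\cap V[H]$, and an arbitrary total colouring off the affected components'' recipe extends $c$ to a total colouring $d\in V[G]$, i.e.\ to a point $d\in B^{V[G]}$ with $d\le c$. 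Closedness of the cross-model order: the relation $d\le c$ between $c\in B^{V[H]}$ and $d\in B^{V[G]}$ is ``$d\supseteq c$'', that is $\bigcap_{x\in X\cap V[H]}\{(c,d): c(x)=d(x)\}$, an intersection of clopen subsets of $2^{X\cap V[H]}\times 2^{X\cap V[G]}$, hence closed in $\mathcal T^{V[H]}\times\mathcal T^{V[G]}$ from the perspective of $V[G]$.

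I expect the one genuinely delicate point to be the bookkeeping in the extension clause: pinning down exactly which ``realized balanced virtual conditions'' must be extended and over which model the extending total colouring is built, and confirming that the distance-parity side condition really does transfer pointwise by Shoenfield so that extendability to a total $2$-colouring survives the passage to a larger generic extension. Everything else --- compactness of $B$, closedness and nonemptiness of the cones, and closedness of the $\le$-relation --- is a routine computation in the Cantor cube $2^X$, following the template for colouring forcings in~\cite{larson-zapletal-GST}.
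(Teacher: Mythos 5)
Your proposal is correct and follows essentially the same route as the paper: identify the balanced virtual conditions (via Lemma~\ref{bvc-classification-2-colouring-poset}) with the total $2$-colourings, viewed as a closed hence compact subset of $2^X$, and then verify nonemptiness/closedness of cones, extendability across extensions, and closedness of the cross-model order. The only difference is cosmetic: where you justify the extension clause by absoluteness of the distance-parity constraints and then extend directly in the larger model, the paper does the same extension but packages the resulting condition through a name over $V$ and balance-equivalence to land on a balanced virtual condition $\overline q \le \overline p$; both amount to the same argument.
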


\begin{proof}
By lemma~\ref{bvc-classification-2-colouring-poset}, the balanced virtual conditions of $\colouringPoset{2}{\Gamma}$ are classified by total $2$-colourings of $\Gamma$.
These form a closed subset $C$ of the space $2^X$, where $X$ is the underlying Polish space of $\Gamma$, and hence form a compact space.
This space is definable since $\Gamma$ is Borel.
We must now verify the conditions (1), (2), and (3) from the definition of compact balance.
For this fix generic extensions $V[G] \subseteq V[H]$ by forcings $Q$ and $R$, respectively.

\begin{enumerate}
\item Let $p \in \colouringPoset{2}{\Gamma}$. By definition of the poset $p$ can be extended to a total $2$-colouring of $\Gamma$, and by lemma~\ref{bvc-classification-2-colouring-poset} this determines a balanced virtual condition
strengthening $p$.
The set $\{ f \in 2^X \mathrel{:} f \restriction \dom{p}  = p \}$ is closed in $2^X$, and hence its intersection with $C$, which is equal to $\{ f \in C : \langle \Coll{\aleph_0}{2^\nat}, \check f \rangle \le p \}$, is closed in $C$.
\item Let $\overline p \in V[G]$ be a balanced virtual condition.
Replacing $H$ by its product with a generic filter for a forcing of cardinality less than $\lambda$ if necessary, we may assume that the virtual condition $\overline p$ is realized in $V[G]$, say by $p$.
Let $f \in 2^X$ extend $p$ to a total $2$-colouring of $\Gamma$ in $V[H]$; then in $V[H]$ we have $\langle \Coll{\aleph_0}{2^\nat}, \check f \rangle \le \overline p$.
Let $\dot q \in V$ be an $R * \Coll{\aleph_0}{2^\nat}^R$-name for a total colouring extending the $R$-realization of $\overline p$, and observe that $\langle R * \Coll{\aleph_0}{2^\nat}, \dot q \rangle$ is balanced in $V$
(here ``$*$'' denotes two-step iteration of forcing).
Choose $\overline q$ a balanced virtual condition in the balance-equivalence class of $\langle R * \Coll{\aleph_0}{2^\nat}, \dot q \rangle$; clearly $\overline q \le \overline p$.
\item This is immediate by the classification of balanced virtual conditions and the fact that $\{ f \in 2^X \mathrel{:} f \restriction \dom{p} = p \}$ is closed in $2^X$ for any partial function $p \subseteq X \times 2$.
\end{enumerate}
\end{proof}

\begin{lemma} \label{parity-no-e0}
The model $W^P$ does not contain an $\VitEq$-transversal.
\end{lemma}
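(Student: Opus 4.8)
The plan is to read this off immediately from the compact balance of the forcing, via preservation of the smooth divide. First I would invoke Lemma~\ref{2-colouring-poset-compactly-balanced}: since $\Gamma$ is assumed to be a bipartite Borel graph of countable colouring number, that lemma applies and shows $\colouringPoset{2}{\Gamma}$ is compactly balanced. Because $P$ is a dense, Suslin, $\sigma$-closed subposet of $\colouringPoset{2}{\Gamma}$, the two forcings are equivalent and their balanced (hence compactly balanced) virtual conditions are matched up by the dense embedding, so $W^P$ is the same model as the $\colouringPoset{2}{\Gamma}$-extension of $W$; in particular the hypotheses of the next step are met.

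Then I would apply Theorem~\ref{compact-balance-smooth-divide} to the compactly-balanced forcing $\colouringPoset{2}{\Gamma}$ and the symmetric Solovay model $W$, obtaining $W^P \models \card{2^\nat / \VitEq} > 2^{\aleph_0}$. To finish, recall the elementary fact observed after that theorem: an $\VitEq$-transversal is a subset of $2^\nat$ in bijection with $2^\nat / \VitEq$ (a class is sent to its unique representative in the transversal), so its existence would force $\card{2^\nat / \VitEq} \le 2^{\aleph_0}$; this contradicts the strict inequality just obtained, so $W^P$ contains no $\VitEq$-transversal.

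I do not expect any genuine obstacle at this point: all of the substantive work has already been carried out in the proof of Lemma~\ref{2-colouring-poset-compactly-balanced} — the verification of clauses (1)--(3) of the definition of compact balance, together with the identification of the classifying space of balanced virtual conditions with a closed, hence compact, subset of $2^X$ — and in the cited Theorem~\ref{compact-balance-smooth-divide} and the remark following it. The only detail deserving a word of care is the harmless interchange of $P$ with $\colouringPoset{2}{\Gamma}$, which is justified by density, and the observation that the $K_{n,\omega_1}$ non-embedding hypothesis plays no role here, so the conclusion of this lemma holds for every bipartite Borel $\Gamma$ of countable colouring number.
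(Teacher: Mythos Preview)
Your proposal is correct and follows essentially the same route as the paper: invoke compact balance (Lemma~\ref{2-colouring-poset-compactly-balanced}), apply Theorem~\ref{compact-balance-smooth-divide} to get $\card{2^\nat / \VitEq} > 2^{\aleph_0}$ in $W^P$, and observe that a transversal would yield an injection $2^\nat / \VitEq \hookrightarrow 2^\nat$, a contradiction. Your added remarks about the density of $P$ in $\colouringPoset{2}{\Gamma}$ and the irrelevance of the $K_{n,\omega_1}$ hypothesis are accurate and already implicit in the paper's surrounding text.
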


\begin{proof}
By theorem~\ref{compact-balance-smooth-divide},
$W^P \models \lvert 2^\nat / \VitEq \rvert > 2^{\aleph_0}$.
In any model of $\ZF$, the existence of a transversal for $\VitEq$ implies the existence of an injection from $2^\nat$ to $2^\nat / \VitEq$, which implies $2^{\aleph_0} \le \lvert 2^\nat / \VitEq \rvert$.
Hence there is no transversal for $\VitEq$ in $W^P$.
\end{proof}

\section{Independence of $\VitEq$-transversals and Nonprincipal Ultrafilters on $\nat$}

The authors of~\cite{choice-and-the-hat-game} observe that there are models in the literature witnessing that the existence of a nonprincipal ultrafilter on $\nat$ does not imply the existence of an $\VitEq$-transversal and
\emph{vice versa}, but that the constructions of which they are aware are far from elementary and require a proper class of Woodin cardinals.
They ask (Question 11) whether there are more elementary constructions witnessing this independence.
We are tempted to answer in the affirmative, but it is unclear how ``elementary'' the methods of geometric set theory are.
In any case we use balanced forcing to reduce the large cardinal assumption required to construct models witnessing the independence of nonprincipal ultrafilters on $\nat$ from $\VitEq$-transversals to a single inaccessible cardinal.

\begin{prop}
The balanced extension of $W$ by $[\nat]^\nat$, ordered by inclusion, contains a nonprincipal ultrafilter on $\nat$ but no $\VitEq$-transversal.
\end{prop}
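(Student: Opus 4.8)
The plan is to analyse the forcing $\poset$ of the statement --- which adds a nonprincipal ultrafilter on $\nat$ to $W$, and whose conditions may equivalently be taken to be countably generated filters on $\nat$ extending the cofinite filter, ordered so that the stronger condition generates the larger filter (the generic object being then a nonprincipal ultrafilter). First I would record the soft facts: $\poset$ is Suslin, since a condition is coded by a real enumerating a countable family of infinite sets with the finite intersection property, and membership in $\poset$, the ordering, and incompatibility are Borel in such codes; and $\poset$ is $\sigma$-closed, since a union of an increasing chain of countably generated proper filters is again one. Hence by the preservation result for $\sigma$-closed forcings recorded above, the balanced extension of $W$ by $\poset$ satisfies $\DC$.

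Next I would classify the balanced virtual conditions of $\poset$ as exactly the ultrafilters on $\nat$ lying in $V$, by an argument parallel to Lemma~\ref{bvc-classification-2-colouring-poset} and the analogous examples of~\cite{larson-zapletal-GST}. Two realizations of a virtual condition in a pair of mutually generic extensions of $V$ must generate the same filter, and that filter lies in the intersection of the two extensions, which is $V$; so a virtual condition is given by a filter in $V$. A filter in $V$ that is not an ultrafilter is split by a set $a \in V$, so the conditions obtained from it by adjoining $a$ and by adjoining $\nat \setminus a$ already lie in $V$ and witness incompatibility; hence such a filter is not balanced. Conversely, if $U \in V$ is an ultrafilter then $U$ is balanced: two conditions below it in mutually generic extensions $V[G_0]$ and $V[G_1]$ are filters containing $U$ whose further members are $U$-positive (meet every member of $U$ in an infinite set), and their compatibility reduces to the combinatorial fact that a $U$-positive $a \in V[G_0]$ and a $U$-positive $b \in V[G_1]$ have $U$-positive intersection --- proved by pulling $b$ back to its ground-model ``set of possible members'' $u_0$, noting that $u_0$ is $U$-positive in $V$ and hence lies in the ultrafilter $U$, and then using the $U$-positivity of $a$ in $V[G_0]$ together with genericity to force members of $a \cap u_0$ into $b$. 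Since by the axiom of choice in $V$ every condition is extended by a ground-model ultrafilter, $\poset$ is balanced.

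With balance in hand, let $G$ be $\poset$-generic over $W$. The filter $U_G = \bigcup G$ is an element of $W[G]$; $\sigma$-closure gives $\powset{\nat}^{W[G]} = \powset{\nat}^{W}$, a density argument shows $U_G$ decides every subset of $\nat$, and $U_G$ contains the cofinite filter, so $U_G$ is a nonprincipal ultrafilter on $\nat$ in $W[G]$. It remains to rule out an $\VitEq$-transversal, for which I would show $\poset$ is compactly balanced, taking the classifying space to be the Stone space $\beta\nat \setminus \nat$ of nonprincipal ultrafilters --- a definable compact Hausdorff space, which the definition of compact balance permits despite its not being metrizable. The downward cone of a condition $F$ is $\bigcap_{a \in F} \{ U : a \in U \}$, an intersection of basic clopen sets and hence closed, and it is nonempty by choice in $V$; a balanced virtual condition realized in $V[H]$ is an ultrafilter there whose generated filter, in any larger $V[G]$, extends by choice in $V[G]$ to an ultrafilter below it; and for each $a \in \powset{\nat}^{V[H]}$ the set $\{ (U,U') : a \notin U \text{ or } a \in U' \}$ is clopen in the product, so intersecting over all such $a$ shows the $\le$-relation between virtual conditions of $V[H]$ and those of a larger $V[G]$ is closed. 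Hence $\poset$ is compactly balanced, and Theorem~\ref{compact-balance-smooth-divide} gives $W[G] \models \card{2^\nat / \VitEq} > 2^{\aleph_0}$, so that, exactly as in Lemma~\ref{parity-no-e0}, there is no $\VitEq$-transversal in $W[G]$; this completes the proof.

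The hard part is the combinatorial core of the classification of balanced virtual conditions: that $U$-positive sets arising from mutually generic extensions of $V$ have $U$-positive intersection whenever $U \in V$ is an ultrafilter. The remaining steps are bookkeeping within the apparatus of geometric set theory, the only subsidiary point meriting care being that the classifying space $\beta\nat \setminus \nat$ is not second countable, so one must check that the definition of compact balance in~\cite{larson-zapletal-GST} genuinely accommodates it --- which it does, since only a definable compact Hausdorff space, with real parameters permitted, is required.
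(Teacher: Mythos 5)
Your argument is correct and follows essentially the same route as the paper, which simply cites Larson--Zapletal (Theorem 7.1.2, Section 7.1, and Corollary 9.2.5) for the facts you reprove in detail: the classification of balanced virtual conditions by ground-model ultrafilters, compact balance via the Stone space of nonprincipal ultrafilters, and then Theorem~\ref{compact-balance-smooth-divide} to exclude an $\VitEq$-transversal. The one point worth making explicit is your opening reformulation of $[\nat]^\nat$ under inclusion as the poset of countably generated filters: the natural map between these posets is not a dense embedding, but both have separative quotient $\powset{\nat}/\mathrm{fin}$ (compatibility is infinite intersection, and $a$ lies separatively below $b$ iff $a \subseteq^* b$), so they are forcing equivalent and your analysis transfers.
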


\begin{proof}
This is corollary 9.2.5 of \cite{larson-zapletal-GST}, but we give a brief sketch.
Let $P$ be the forcing in the statement of the proposition.
As noted in \cite[sec. 7.1]{larson-zapletal-GST} (and well-known in general), forcing with $P$ adds a nonprincipal ultrafilter on $\nat$ to $W$.
This forcing is compactly balanced because its balanced virtual conditions are classified by nonprincipal ultrafilters on $\omega$, which naturally form a compact topological space (see \cite[thm. 7.1.2]{larson-zapletal-GST}).
Hence $W^P \models \lvert 2^\nat / \VitEq \rvert > 2^{\aleph_0}$, and so there is no $\VitEq$-transversal in $W^P$.
\end{proof}

\begin{prop}
The balanced extension of $W$ by the $\VitEq$-transversal poset\footnote{A natural forcing notion to add an $\VitEq$-transversal by countable approximations; see \cite[def. 6.4.4]{larson-zapletal-GST}} does not contain a
nonprincipal ultrafilter on $\nat$.
\end{prop}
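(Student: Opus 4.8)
The plan is to show that the $\VitEq$-transversal poset $P$ is Bernstein-balanced, with every balanced virtual condition Bernstein-balanced, and then to conclude by Theorem~\ref{bernstein-no-ultra} exactly as in the corollary following the graph-colouring lemma. Preservation of $\DC$ is routine: a condition of $P$ is a countable partial $\VitEq$-selector, and the union of an ascending chain of such is again one (using $\DC$), so $P$ --- more precisely the $\sigma$-closed Suslin dense set of enumerations of such selectors --- is $\sigma$-closed. From \cite[sec. 6.4]{larson-zapletal-GST} recall that $P$ is balanced and that its balanced virtual conditions are classified by the total $\VitEq$-transversals of the ground model, with a transversal $c$ corresponding to $\langle \Coll{\aleph_0}{2^\nat}, \check c \rangle$ (once $2^\nat$ has been collapsed, $c$ is a countable partial selector, hence a condition of $P$). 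The mechanism behind this classification is that in mutually generic extensions every $\VitEq$-class not already realized in $V$ is, being countable, realized in at most one of them, so extending $c$ by fresh representatives on each side never produces a conflict.

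The heart of the matter is the Bernstein-balance verification, which runs by contradiction along the lines of Example~12.2.11 of \cite{larson-zapletal-GST}; here, because $\VitEq$-classes are countable, no analogue of the ``some $K_{n,\omega_1}$ does not embed'' hypothesis is needed. Fix a total transversal $c$, work in a generic extension $V[G]$, and suppose $\langle \Coll{\aleph_0}{2^\nat}, \check c \rangle$ fails to be Bernstein-balanced, as witnessed by $p \le \check c$ in $V[G]$, a poset $Q \in V$ with $\powset{Q} \cap V$ countable in $V[G]$, an uncountable family $\mathcal H$ of mutually $V$-generic filters over $Q$, and conditions $p_H \le \check c$ in $V[H]$ with $p_H \perp p$ for each $H \in \mathcal H$. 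Since $c \subseteq p$ and $c \subseteq p_H$, the incompatibility of $p$ and $p_H$ means that $p \cup p_H$ is not a partial selector, so there are $b_H \in p$ and $a_H \in p_H$ with $a_H \ne b_H$ and $a_H \mathrel{\VitEq} b_H$; thus the conflict is localized to the $\VitEq$-class $C_H = [b_H]_{\VitEq}$.

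The contradiction is then extracted by a double pigeonhole. As $p$ is countable in $V[G]$, there is a single $b \in p$ with $\mathcal H' = \{ H \in \mathcal H : b_H = b \}$ uncountable; then the conflict class $C = [b]_{\VitEq}$ is the same for every $H \in \mathcal H'$, and each $a_H$ with $H \in \mathcal H'$ lies in the countable set $C$, so there is a single $a \in C \setminus \{ b \}$ with $\mathcal H'' = \{ H \in \mathcal H' : a_H = a \}$ uncountable. For every $H \in \mathcal H''$ we have $a = a_H \in p_H \subseteq V[H]$, so picking two distinct $H_1, H_2 \in \mathcal H''$ and using mutual genericity gives $a \in V[H_1] \cap V[H_2] = V$. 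Then $C = [a]_{\VitEq}$ is countable and definable from $a \in V$, hence $C \in V$ and therefore $b \in C \subseteq V$. But a ground-model class $C$ has its unique $c$-representative $c(C) \in C$, and since $c \subseteq p$ while $p \cap C$ is a singleton we get $b = c(C)$; likewise $a = c(C)$, because $c \subseteq p_H$ and $p_H \cap C$ is a singleton for $H \in \mathcal H''$ --- contradicting $a \ne b$. Hence every $\langle \Coll{\aleph_0}{2^\nat}, \check c \rangle$, and so every balanced virtual condition, is Bernstein-balanced; since every condition of $P$ extends to a total transversal and hence lies below such a virtual condition, $P$ is Bernstein-balanced, and Theorem~\ref{bernstein-no-ultra} yields that the balanced extension $W^P$ of $W$ contains no nonprincipal ultrafilter on $\nat$.

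No single step here is hard; the points requiring care are getting the classification of the balanced virtual conditions right and observing that the incompatibility of $p$ with $p_H$ must reduce to a conflict over one countable $\VitEq$-class, after which the double pigeonhole together with mutual genericity pins the offending representative to $V$ and the definition of the order of $P$ closes the argument.
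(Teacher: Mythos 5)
Your proof is correct, but it takes a genuinely different route from the paper. The paper disposes of the proposition in two lines of citations: the equivalence relation $\VitEq$ is (trivially) placid, the transversal poset of a placid equivalence relation is placid by \cite[cor. 9.3.6]{larson-zapletal-GST}, placid implies Bernstein-balanced by \cite[thm. 12.2.8]{larson-zapletal-GST}, and then Theorem~\ref{bernstein-no-ultra} applies. You instead verify Bernstein-balance by hand, mirroring the paper's own argument for the $2$-colouring poset (and Example 12.2.11 of \cite{larson-zapletal-GST}): classify the balanced virtual conditions by ground-model total transversals, localize any incompatibility $p \perp p_H$ to a single countable $\VitEq$-class, pigeonhole twice over the countable condition $p$ and the countable class, and use $V[H_1] \cap V[H_2] = V$ for mutually generic filters to force the offending representative into $V$, where it must coincide with the $c$-chosen representative on both sides --- a contradiction. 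All the steps check out (in particular the reduction of incompatibility to a pair $b_H \in p$, $a_H \in p_H$ with $a_H \ne b_H$, $a_H \VitEq b_H$, and the fact that an $\VitEq$-class of a ground-model real is contained in $V$), and note that you only really need Bernstein-balanced virtual conditions below every condition, so the completeness half of the classification you cite is not essential. What the paper's route buys is brevity and reuse of general machinery (placidity); what yours buys is a self-contained argument that makes visible exactly where countability of $\VitEq$-classes substitutes for the $K_{n,\omega_1}$ non-embedding hypothesis used in the graph-colouring case, which is a worthwhile observation even if the paper chose the shorter path.
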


\begin{proof}
The equivalence relation $\VitEq$ is trivially placid (see \cite[def. 3.3.1]{larson-zapletal-GST}), and so the $\VitEq$-transversal poset is placid\footnote{For which definition see \cite[def. 9.3.1]{larson-zapletal-GST}.}
by \cite[cor. 9.3.6]{larson-zapletal-GST}.
Placid implies Bernstein-balanced (\cite[thm. 12.2.8]{larson-zapletal-GST}), so the result follows directly from theorem~\ref{bernstein-no-ultra}.
\end{proof}

Combining these two propositions, we see that the existence of a nonprincipal ultrafilter on $\nat$ is independent of the existence of an $\VitEq$-transversal.
This partially answers Question~11 of \cite{choice-and-the-hat-game} by reducing the required large-cardinal assumption to an inaccessible cardinal.
Unpublished work of Hanul Jeon \cite{hanul}
eliminates the need of a large-cardinal assumption entirely, thus fully answering Question~11 of \cite{choice-and-the-hat-game}.
Hanul follows a different approach to choiceless independence results which utilizes recently-modernized machinery for constructing symmetric submodels of forcing extensions.

\section{Chromatic Number $2 < n < \aleph_0$}

%The hat game may be easily generalized to use some finite number $k > 2$ of hat colours.
%It turns out that for a finite number of players this can be solved with at most one error, just as in the case of the two-colour hat game.
%The solution is analogous: The first player says the sum modulo $k$ of the hat colours in front of her, and now all remaining players can correctly deduce their own hat colours.
%With the axiom of choice, this extends to a countably-infinite collection of players arranged in order-type $\omega$ using the notion of a \textit{$k$-arity function}: a function $f : k^\nat \rightarrow k$ such that points differing
%at precisely one coordinate receive different values under $f$.

As discussed in the preliminaries, the natural generalization of the hat game to a finite number of colours can be solved by more or less the same means as in the $2$-colour case.
Specifically, players make use of a \emph{$k$-arity function}, which can be thought of as a colouring of the graph $\prod_{n = 1}^\infty K_k$, where $K_k$ is the complete graph on $k$ vertices.

As in the case of parity functions, the full axiom of choice is not required to establish the existence of $k$-arity functions.
In particular, the existence of a nonprincipal ultrafilter on $\nat$ or the existence of an $\VitEq$-transversal each easily imply the existence of a $k$-arity function. 

We shall demonstrate in this section, under the assumption of the existence of an inaccessible cardinal, that the existence of a $k$-arity function does not imply the existence of either a nonprincipal ultrafilter on $\nat$ or an
$\VitEq$-transversal.
In fact, we shall establish that for any locally-countable Borel graph $\Gamma$ which has chromatic number $n$ in $V$, where $2 < n < \aleph_0$, there is a balanced and Bernstein-balanced forcing extension of the symmetric Solovay model
in which $\Gamma$ has chromatic number $n$, there is no nonprincipal ultrafilter on $\nat$, and there is no $\VitEq$-transversal.
We do not know how to weaken the assumption of local countability to something like countable colouring number.

When we refer to $\VitEq$ on $k^\nat$, we mean the path-connectedness relation on $\prod_{n = 0}^\infty K_k$, which is easily seen to be Borel bireducible with $\VitEq$ on $2^\nat$.

\begin{prop}
Let $\Gamma$ be a locally-countable Borel graph on a Polish space $X$ with chromatic number $n < \aleph_0$, and assume there is an inaccessible cardinal.
Then there is a model of $\ZF + \DC$ in which $\Gamma$ has chromatic number $n$, there is no nonprincipal ultrafilter on $\nat$, and there is no $\VitEq$-transversal.
\end{prop}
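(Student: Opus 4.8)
We may assume $n \ge 2$, the case $n = 1$ being trivial since then $\Gamma$ is edgeless and $W$ itself works. The plan is to imitate the treatment of the $2$-colouring poset, replacing the bipartite distance-parity device (which has no analogue for $n \ge 3$) by a restriction on the domains of conditions that is available because $\Gamma$ is locally countable. Define the \emph{$n$-colouring poset} $P$: its conditions are enumerations of proper partial $n$-colourings $p$ of $\Gamma$ whose domain $\dom{p}$ is $\Gamma$-closed, i.e.\ contains every $\Gamma$-neighbour of each of its elements, ordered by reverse inclusion. Local countability is consumed in several ways. First, the $\Gamma$-closure of a countable subset of $X$ is a countable union of countable neighbourhoods, hence countable, so every condition has countable domain; and given any countable partial colouring $q$ that extends to a total $n$-colouring $c$, the restriction of $c$ to the $\Gamma$-closure of $\dom{q}$ is a condition below $q$, so $P$ is dense in $\colouringPoset{n}{\Gamma}$. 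Second, every condition of $P$ extends to a total $n$-colouring: colour $\dom{p}$ by $p$ and the induced subgraph on $X \setminus \dom{p}$ — which has no $\Gamma$-edges to $\dom{p}$ since the latter is $\Gamma$-closed — by any total $n$-colouring, which exists in $V$ because $\chi(\Gamma) \le n$ there. Third, using the Lusin--Novikov theorem (Theorem~\ref{lusin-novikov}) to uniformize $\Gamma$ by countably many Borel partial functions, one checks exactly as for $\colouringPoset{2}{\Gamma}$ that the set of enumerations of conditions of $P$ is Borel and that compatibility is Borel, so $P$ is Suslin; and $P$ is $\sigma$-closed since the union of a descending $\omega$-chain of conditions is again a proper partial $n$-colouring whose countable domain, being a union of $\Gamma$-closed sets, is $\Gamma$-closed. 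As $P$ is $\sigma$-closed, $W^P \models \DC$, and forcing with $P$ adds a total $n$-colouring of $\Gamma$ to $W$. (For $n = 2$ this recovers, up to the chromatic-number remark below, the earlier results on $\colouringPoset{2}{\Gamma}$.)

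Next I would classify the balanced virtual conditions of $P$ as exactly the total $n$-colourings of $\Gamma$ in $V$, paralleling Lemma~\ref{bvc-classification-2-colouring-poset}. The two sub-claims driving that proof have strengthened analogues here: by the Lusin--Novikov uniformization, any vertex lying in a generic extension $V[H]$ has all of its countably many $\Gamma$-neighbours inside $V[H]$, so a name for a vertex outside the ground model is forced to name a vertex with no ground-model neighbours, and names coming from mutually-generic factors are forced to name non-adjacent vertices. Granting these, the rest of the classification proceeds as before: a balanced virtual condition decides a colour $c_x < n$ for each $x \in X$, the map $c = (x \mapsto c_x)$ lies in $V$ by choice in $V$ and is a proper $n$-colouring, distinct total colourings are balance-inequivalent, and each $\langle \Coll{\aleph_0}{X}, \check c \rangle$ is balanced. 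Hence $P$ is balanced, which gives $\chi(\Gamma) \le n$ in $W^P$. For the reverse inequality, work in $V$, where $\AC$ holds: since $\chi(\Gamma) = n$ there, the de Bruijn--Erd\H{o}s theorem provides a finite subgraph $\Gamma_0 \subseteq \Gamma$ that is not $(n-1)$-colourable; being a fixed finite graph, both $\Gamma_0$ and the failure of its $(n-1)$-colourability are absolute, so $\chi(\Gamma) \ge n$ in $W^P$. Therefore $\chi(\Gamma) = n$ in $W^P$.

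It remains to exclude the two combinatorial objects. For nonprincipal ultrafilters, I would show $P$ is Bernstein-balanced with every balanced virtual condition Bernstein-balanced, running the argument of the analogous lemma for $\colouringPoset{2}{\Gamma}$: local countability forbids an embedding of $K_{1,\omega_1}$, so the technical hypothesis there is met, and a witness to an incompatibility is again a monochromatic $\Gamma$-edge, so the pigeonhole-and-mutual-genericity argument — the candidate partners lying in a fixed countable $\Gamma$-neighbourhood, hence, if repeated uncountably often, already in $V$ — yields the contradiction. Theorem~\ref{bernstein-no-ultra} then shows $W^P$ contains no nonprincipal ultrafilter on $\nat$. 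For $\VitEq$-transversals, I would verify $P$ is compactly balanced as in Lemma~\ref{2-colouring-poset-compactly-balanced}: the total $n$-colourings of $\Gamma$ form a closed subset of $n^X$ (the proper-colouring requirement is an intersection of clopen conditions), hence a compact definable space, and conditions (1)--(3) of compact balance are checked verbatim. Theorem~\ref{compact-balance-smooth-divide} gives $W^P \models \card{2^\nat / \VitEq} > 2^{\aleph_0}$, so, as in Lemma~\ref{parity-no-e0}, there is no $\VitEq$-transversal in $W^P$; when $\VitEq$ is understood on $n^\nat$ this follows from the Borel bireducibility of that relation with $\VitEq$ on $2^\nat$. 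Thus $W^P$ is a transitive model of $\ZF + \DC$ with the required properties.

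The step I expect to require the most care is the design of $P$ itself: it must be Suslin so the geometric machinery applies, $\sigma$-closed so $\DC$ is preserved, and dense in $\colouringPoset{n}{\Gamma}$ so that it actually adds an $n$-colouring, and for $n \ge 3$ there is no substitute for the bipartite distance-parity trick used to extend partial colourings in the $2$-colour case. This is exactly where the hypothesis of local countability (rather than mere countable colouring number) is used, and the $\Gamma$-closed-domain formulation is what makes it go through. The accompanying technical heart is the classification of balanced virtual conditions, which underwrites the balance, Bernstein-balance, and compact-balance verifications alike.
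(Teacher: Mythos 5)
Your argument is essentially correct, but it takes a genuinely different route from the paper. The paper disposes of this proposition in a few lines by citing existing machinery: it uses the simplicial-complex forcing of \emph{finite} partial $n$-colourings extendible to total colourings (\cite[Example 6.2.11]{larson-zapletal-GST}), quotes that this complex is fragmented, hence the poset is placid and therefore Bernstein-balanced (\cite[Theorem 6.2.7, Example 9.3.4, Theorem 12.2.8]{larson-zapletal-GST}), giving no nonprincipal ultrafilter, and quotes compact balance (\cite[Example 9.2.17]{larson-zapletal-GST}) to exclude an $\VitEq$-transversal. You instead build a bespoke countable-condition poset and redo the balance, Bernstein-balance, and compact-balance analyses by hand, in parallel with the paper's own treatment of $\colouringPoset{2}{\Gamma}$; notably, the paper remarks after its proof that the forcing by countable partial colourings ``would also have sufficed,'' while flagging exactly the obstruction you confront --- that extendability of a countable partial colouring is not obviously Borel --- and your requirement that domains be $\Gamma$-closed is a clean way around it under local countability, since $\Gamma$-closed domains are unions of countable components, extendability becomes automatic from $\chi(\Gamma)\le n$ (an absolute, de Bruijn--Erd\H{o}s-type condition), and membership is Borel via Lusin--Novikov. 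What each approach buys: the paper's is much shorter and leans entirely on cited results; yours is self-contained relative to the earlier sections, makes the use of local countability explicit, and additionally verifies the lower bound $\chi(\Gamma)\ge n$ in $W^P$ (via a finite non-$(n-1)$-colourable subgraph and absoluteness), which the paper leaves implicit. Two small points to tighten, both at the same level of gloss as the paper's own $2$-colouring argument: in your Bernstein-balance step, an incompatibility $p_H\perp p$ need not be witnessed by a monochromatic edge --- it can also be a disagreement at a point of $(\dom{p}\cap\dom{p_H})\setminus V$ --- but such witnesses for distinct mutually generic $H$ are pairwise distinct elements of the countable set $\dom{p}$, so only countably many $H$ can be of that kind and the pigeonhole argument survives; and the assertion that forcing with $P$ adds a \emph{total} colouring over $W$ needs the density of $\{q : x\in\dom{q}\}$ computed in $W$, which requires that a countable component all of whose finite subgraphs are $n$-colourable is $n$-colourable in $\ZF+\DC$ (K\"onig's lemma for finitely branching trees on a countable set, no choice needed).
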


\begin{proof}
We use the forcing $P$ of \cite[Example 6.2.11]{larson-zapletal-GST},
which is the forcing based on the simplicial complex $\mathcal K$ of finite partial $n$-colourings of $\Gamma$ which can be extended to total colourings.
The complex $\mathcal K$ is fragmented by \cite[Theorem 6.2.7]{larson-zapletal-GST} (it is clearly locally countable; see \cite[Example 6.2.11]{larson-zapletal-GST}), and so by \cite[Example 9.3.4]{larson-zapletal-GST} 
and \cite[Therem 12.2.8]{larson-zapletal-GST} it is Bernstein-balanced. 
Consequently $W^P$ contains no nonprincipal ultrafilter on $\nat$.
Example 9.2.17 in \cite{larson-zapletal-GST} notes that $P$ is compactly balanced, which implies there is no $\VitEq$-transversal in $W^P$.
\end{proof}

Note that the forcing consisting of countable partial $n$-colourings of $\Gamma$ which can be extended to total colourings would also have sufficed in the above proof, by similar arguments as in the $2$-colouring case.
However, the assumption of local countability is still difficult to relax because there is no apparent Borel way to determine whether a countable partial colouring can be extended to a total colouring without this assumption.
Also, since the Hamming graph is locally-countable, if our goal had only been to prove theorem~\ref{main-thm} we could have used the simplicial complex graph colouring forcing employed here rather than the $2$-colouring poset
$P_2(H)$.
The value of defining $P_2(\Gamma)$ for bipartite Borel graphs $\Gamma$ is that it applies to the broader class of bipartite graphs with countable colouring number.

\section{Open Problems}

We collect here a number of questions relating to the logical strength of graph-colouring principles in $\ZF + \DC$.
The first question implicitly considers the extent to which the assumptions of countable colouring number and non-embedding of some $K_{n, \omega_1}$ in our main theorem~(\ref{main-thm}) can be relaxed.

\begin{quest}
Is there a Borel bipartite graph $\Gamma$ such that every model of $\ZF + \DC$ containing a $2$-colouring of $\Gamma$ also contains a nonprincipal ultrafilter on $\nat$?
What about the analogous question for an $\VitEq$-transversal?
\end{quest}

In the case of finite chromatic number greater than $2$ we needed to make the even stronger requirement that graphs be locally countable. Can this be relaxed?

\begin{quest}
Is there a Borel graph $\Gamma$ with chromatic number $k < \aleph_0$ and countable colouring number such that every model of $\ZF + \DC$ containing a $2$-colouring of $\Gamma$ also contains a nonprincipal ultrafilter on $\nat$?
What about the analogous question for a $\VitEq$-transversal?
\end{quest}

The following question is also of interest:

\begin{quest}
Is there a Borel graph $\Gamma$ of finite chromatic number $k$ and uncountable colouring number which has a $k$-colouring in some model of $\ZF + \DC$ which contains neither a nonprincipal ultrafilter on $\nat$ nor a
$\VitEq$-transversal?
\end{quest}

One can also weaken the above questions to ask about colouring a finite-chromatic Borel graph with some finite number of colours which may be greater than the chromatic number.
Another means of generalization is to weaken the definability requirement that graphs be Borel to requirements such as that the graph be analytic, coanalytic, or projective;
and another asks about models where there is no $E$-transversal for some nonsmooth analytic equivalence relation $E$ on a Polish space.
All of our main questions relate to the following, which we expect to be a long-term endeavor:

\begin{quest}
What are the classes of projectively-definable graphs $\Gamma$ of countable chromatic number such that every model of $\ZF + \DC$ containing a $\chi(\Gamma)$-colouring satisfies each of the following principles?
\begin{itemize}
\item There is a nonprincipal ultrafilter on $\nat$.
\item There is an $\VitEq$-transversal.
\item There is neither a nonprincipal ultrafilter on $\nat$ nor an $\VitEq$-transversal.
\end{itemize}
\end{quest}

\emph{Acknowledgement.} The author gratefully acknowledges the extensive help provided by his advisor, Justin Moore, in conducting this research and preparing this paper.
The author's research was partly supported by NSF grants DMS-1854367 and DMS-2153975.
He thanks Paul Larson for suggesting the hat game problem, for conversations about technical issues encountered along the way, and for multiple careful reviews of this paper.
Hanul Jeon also took a great interest in this work and conversations with him have resulted in the resolution of large-cardinal issues which the author had initially considered future work.

\bibliographystyle{plain}
\bibliography{math}

\end{document}